\documentclass[twoside,leqno,10pt]{amsart}
\usepackage{amsfonts}
\usepackage{amsmath}
\usepackage{amscd}
\usepackage{amssymb}
\usepackage{amsthm}
\usepackage{amsrefs}
\usepackage{latexsym}
\usepackage{bbm}
\usepackage{enumerate}
\setlength{\textwidth}{16.5cm}
\setlength{\oddsidemargin}{0cm}
\setlength{\evensidemargin}{0cm}
\setlength{\topmargin}{0cm}
\setlength{\headheight}{0cm}
\setlength{\headsep}{0.5cm}
\setlength{\topskip}{0cm}
\setlength{\textheight}{22.5cm}
\setlength{\footskip}{.5cm}

\begin{document}

\newtheorem{theorem}[subsection]{Theorem}
\newtheorem{proposition}[subsection]{Proposition}
\newtheorem{lemma}[subsection]{Lemma}
\newtheorem{corollary}[subsection]{Corollary}
\newtheorem{conjecture}[subsection]{Conjecture}
\newtheorem{prop}[subsection]{Proposition}
\numberwithin{equation}{section}
\newcommand{\mr}{\ensuremath{\mathbb R}}
\newcommand{\dif}{\mathrm{d}}
\newcommand{\intz}{\mathbb{Z}}
\newcommand{\ratq}{\mathbb{Q}}
\newcommand{\natn}{\mathbb{N}}
\newcommand{\comc}{\mathbb{C}}
\newcommand{\rear}{\mathbb{R}}
\newcommand{\prip}{\mathbb{P}}
\newcommand{\uph}{\mathbb{H}}
\newcommand{\fief}{\mathbb{F}}
\newcommand{\majorarc}{\mathfrak{M}}
\newcommand{\minorarc}{\mathfrak{m}}
\newcommand{\sings}{\mathfrak{S}}
\newcommand{\fA}{\ensuremath{\mathfrak A}}
\newcommand{\mn}{\ensuremath{\mathbb N}}
\newcommand{\mq}{\ensuremath{\mathbb Q}}
\newcommand{\half}{\tfrac{1}{2}}
\newcommand{\f}{f\times \chi}
\newcommand{\summ}{\mathop{{\sum}^{\star}}}
\newcommand{\chiq}{\chi \bmod q}
\newcommand{\chidb}{\chi \bmod db}
\newcommand{\chid}{\chi \bmod d}
\newcommand{\sym}{\text{sym}^2}
\newcommand{\hhalf}{\tfrac{1}{2}}
\newcommand{\sumstar}{\sideset{}{^*}\sum}
\newcommand{\sumprime}{\sideset{}{'}\sum}
\newcommand{\sumprimeprime}{\sideset{}{''}\sum}
\newcommand{\V}{V\left(\frac{nm}{q^2}\right)}
\newcommand{\sumi}{\mathop{{\sum}^{\dagger}}}
\newcommand{\mz}{\ensuremath{\mathbb Z}}
\newcommand{\leg}[2]{\left(\frac{#1}{#2}\right)}
\newcommand{\muK}{\mu_{\omega}}

\title{On Hecke eigenvalues at primes of the form $[g(n)]$}
\author{Stephan Baier and Liangyi Zhao}
\maketitle

\begin{abstract}
In this paper, we study the average of the Fourier coefficients of a holomorphic cusp form for the full modular group at primes of the form $[g(n)]$.
\end{abstract}

\noindent {\bf Mathematics Subject Classification (2000)}: 11F11, 11F30, 11F60, 11L03, 11L07, 11L20.\newline

\noindent {\bf Keywords}: Hecke eigenvalues, Piatetski-Shapiro primes

\section{Introduction}
The estimation of mean-values of arithmetic functions over sparse sequences and the detection of primes in arithmetically 
interesting and sparse sets of natural numbers are often very hard and of great interest to analytic number theorists.
In \cite{BaZh}, we investigated a problem that addresses both of these questions, namely the distribution of Fourier coefficients 
of cusp forms for the full modular group at Piatetski-Shapiro primes. These are primes of the form $\left[n^c\right]$, where 
$c>1$ is fixed. We successfully handled the $c$'s in the range $1<c<8/7$. In this paper, we extend our result in \cite{BaZh} to primes 
of the form $\left[g(n)\right]$, where $g(x)$ is
a general ``nice'' function that grows much faster than a linear function. However, our result will be weaker in the sense that it covers
the result in \cite{BaZh} only for the range $1<c<30/29$. \newline
 
We first introduce some notations and conditions. By $F$ we denote a holomorphic cusp form of weight $\kappa$ for the full modular 
group $SL_2(\intz)$ and by $\lambda_F(n)$ the normalized $n$-th Fourier coefficient of $F$, {\it i.e.} we assume that
$$
F(z)=\sum\limits_{n=1}^{\infty} \lambda_F(n)n^{(\kappa-1)/2}e(nz)
$$
for $\Im z>0$.  We note that the Ramanujan-Petersson conjecture, proved by P. Deligne \cite{Deli, Deli2}, gives a bound for the modulus of 
$\lambda_F$. It states that for any fixed $\varepsilon>0$,
\begin{equation} \label{Ramanujanpetersson}
 \lambda_F(n) \ll d(n) \ll n^{\varepsilon},
\end{equation}
where $d(n)$ is the number of divisors of $n$.  If we assume, in addition, that $F$ is an eigenform of all the Hecke operators, then 
$F$ can be normalized such that $\lambda_F(1)=1$ and with this normalization the implied constant in the first ``$\ll$'' in \eqref{Ramanujanpetersson} 
can be taken to be 1. \newline

Further, we assume that $g: [1,\infty)\rightarrow [1,\infty)$ is a function satisfying the following conditions.
\begin{enumerate}[(i)]
\item $g$ is monotonically increasing.
\item $g$ is infinitely differentiable.
\item $g$ satisfies the inequalities
\begin{equation} \label{gcond}
x\le g(x) \le x^{30/29-\varepsilon}.
\end{equation}
\end{enumerate}
We note that then the inverse function $f:\mbox{range}(g)\rightarrow [1,\infty)$ of $g$ exists and has the following corresponding properties.
\begin{enumerate}[(a)]
\item $f$ is monotonically increasing.
\item $f$ is infinitely differentiable.
\item $f$ satisfies the inequalities
\begin{equation} \label{fcond}
x^{29/30+\varepsilon}\le f(x)\le x. 
\end{equation}
\end{enumerate}
Moreover, we shall also suppose that the derivatives of $f$ satisfy the following conditions.
\begin{enumerate}
\item[(d)] The $k$-th derivative of $f$ satisfies
\begin{equation} \label{fder}
f^{(k)}(x) \asymp \frac{f(x)}{x^k} \quad \mbox{for all $x$ in the image of $g$ and } k\in\mathbb{N},
\end{equation}
where the implied constants depend on $k$ alone.
\item[(e)] The second and third derivatives of $f$ satisfy
\begin{equation} \label{f2ndder}
 2 f''(x) + x f'''(x) \asymp \frac{f(x)}{x^2} \quad \mbox{for all $x$ in the image of $g$.}
 \end{equation}
\end{enumerate}

Furthermore, we denote the set of primes by $\mathbb{P}$. The main result of this paper is the following. 

\begin{theorem} \label{mainresult}
Let $g: [1,\infty)\rightarrow [1,\infty)$ be a function satisfying the conditions (i) -- (iii) above.  Suppoer that the inverse function of $g$ satisfies the condtions (a)--(e) above.  Let $\lambda_F(n)$ 
be the normalized $n$-th Fourier coefficient of a holomorphic cusp form $F$ for the full modular group $SL_2(\intz)$. 
Then there exists a constant $C>0$ depending on $g$ and $F$ such that
\begin{equation} \label{themainresult}
\sum\limits_{\substack{n\le N\\ [g(n)]\in \mathbb{P}}} \lambda_F\left(\left[ g(n) \right]\right) \ll N\exp \left( -C \sqrt{\log N} \right),
\end{equation}
where the implied $\ll$-constant depends on $g$ and $F$.
\end{theorem}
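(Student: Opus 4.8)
The plan is to detect the relation $[g(n)]=p$ through the inverse function $f=g^{-1}$ and the sawtooth function $\psi(x)=x-\lfloor x\rfloor-\tfrac12$, which turns the sum in \eqref{themainresult} into a Piatetski--Shapiro sum twisted by $\lambda_F$. Since $g$ is increasing and, by \eqref{fder}, $g'(x)=1/f'(g(x))\asymp g(x)/x\gg 1$, the map $n\mapsto[g(n)]$ is $O(1)$-to-one and, up to $O(N^{\varepsilon})$ exceptional terms, $n\le N$ is equivalent to $[g(n)]\le[g(N)]$. For a prime $p\le[g(N)]$ the solutions of $[g(n)]=p$ are the integers $n$ with $f(p)\le n<f(p+1)$; since $f'(x)\asymp f(x)/x\ll 1$ there are $O(1)$ of them, and when $f(p),f(p+1)\notin\intz$ their number equals $\bigl(f(p+1)-f(p)\bigr)+\psi(-f(p+1))-\psi(-f(p))$. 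Writing $\mathcal M=\sum_{p\le g(N)}\lambda_F(p)\bigl(f(p+1)-f(p)\bigr)$ and $\mathcal E=\sum_{p\le g(N)}\lambda_F(p)\bigl(\psi(-f(p+1))-\psi(-f(p))\bigr)$, we obtain
\[
\sum_{\substack{n\le N\\ [g(n)]\in\mathbb{P}}}\lambda_F\bigl([g(n)]\bigr)=\mathcal M+\mathcal E+O(N^{\varepsilon}).
\]

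To estimate $\mathcal M$ I would use the prime number theorem for $L(s,F)=\sum_n\lambda_F(n)n^{-s}$. This $L$-function is entire, of finite order, carries an Euler product, and has a standard zero-free region $\sigma\ge 1-c/\log(|t|+3)$ with no exceptional zero (the latter being known for the full modular group), so that $\psi_F(x):=\sum_{n\le x}\Lambda_F(n)\ll x\exp(-c\sqrt{\log x})$, where $\Lambda_F$ denotes the coefficients of $-L'/L(s,F)$. Since $\Lambda_F(p)=\lambda_F(p)\log p$ and the prime powers contribute $O(x^{1/2}\log x)$ by \eqref{Ramanujanpetersson}, partial summation gives $\sum_{p\le x}\lambda_F(p)\ll x\exp(-c\sqrt{\log x})$. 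Now $w(t):=f(t+1)-f(t)\asymp f(t)/t$ and $w'(t)\ll f(t)/t^2$ (by the mean value theorem and \eqref{fder}), so partial summation in $\mathcal M=\sum_{p\le g(N)}\lambda_F(p)w(p)$ produces a boundary term of size $\ll g(N)\exp(-c\sqrt{\log g(N)})\cdot N/g(N)\ll N\exp(-c\sqrt{\log N})$ together with an integral $\ll\int^{g(N)}\exp(-c\sqrt{\log t})\,f(t)/t\,\dif t$, which upon substituting $t=g(u)$ and using $g'(u)\asymp g(u)/u$, $f(g(u))=u$, becomes $\ll\int^{N}\exp(-c\sqrt{\log u})\,\dif u\ll N\exp(-c\sqrt{\log N})$ (here $g(u)\ge u$ is used). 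Hence $\mathcal M\ll N\exp(-C\sqrt{\log N})$, and since $\mathcal E$ will be bounded by a power saving, it is $\mathcal M$ that fixes the shape of \eqref{themainresult}.

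It therefore remains to prove $\mathcal E\ll N^{1-\delta}$ for some $\delta>0$, for which I would run the usual Piatetski--Shapiro circle of ideas. First, replace $\psi$ by its Fourier expansion truncated at height $H$ (via Vaaler's lemma), reducing $\mathcal E$ to $O(H^{\varepsilon})$ sums of the form $\tfrac1h\sum_{p\le g(N)}\lambda_F(p)e\bigl(hf(p)\bigr)$, $1\le|h|\le H$, plus an error coming from the $p$ with $f(p)$ close to an integer which is itself majorised by such sums through smooth minorant/majorant approximations. Next, apply a Vaughan-type combinatorial identity to $-L'/L(s,F)$ so as to decompose each $\sum_{p\le g(N)}\lambda_F(p)e\bigl(hf(p)\bigr)$ into Type~I sums $\sum_{d\le D}c_d\sum_{m\le g(N)/d}\lambda_F(m)e\bigl(hf(dm)\bigr)$ with $\sum_d|c_d|\ll D^{1+\varepsilon}$ and Type~II bilinear sums $\sum_{m\sim M}\sum_{k\sim K}\alpha_m\beta_k\lambda_F(mk)e\bigl(hf(mk)\bigr)$ with $|\alpha_m|,|\beta_k|\ll(MK)^{\varepsilon}$ and with $M,K$ bounded away from $1$ and $g(N)$. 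For the Type~I sums I would apply the Voronoi summation formula for $\lambda_F$ and then stationary phase, the derivative hypotheses \eqref{fder}, \eqref{f2ndder} ensuring that the phase $hf(dm)$ has the regularity needed for the dual sum to be genuinely shorter; for the Type~II sums I would use Cauchy--Schwarz in $m$, expand the square, isolate the diagonal $k_1=k_2$ (controlled by the Rankin--Selberg bound $\sum_{m\le M}|\lambda_F(m)|^2\ll M$), and treat the off-diagonal via van der Corput / exponent-pair estimates for the smooth phase $h\bigl(f(mk_1)-f(mk_2)\bigr)$ --- once more using \eqref{fder}, \eqref{f2ndder} --- possibly after a further Voronoi step. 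Summing over $h\le H$ and over the dyadic parameters and optimising $H$ together with the Type~I/Type~II cutoff against the constraint $f(x)\ge x^{29/30+\varepsilon}$ from \eqref{gcond}/\eqref{fcond} then yields $\mathcal E\ll g(N)^{1-\delta}\ll N^{1-\delta'}$ for some $\delta'>0$.

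I expect the decisive difficulty to be the Type~II estimate: the $\lambda_F$-twisted bilinear exponential sum must be bounded with a saving that is both uniform in $h\le H$ and strong enough that $g(N)^{1-\delta}\ll N^{1-\delta'}$ despite the $p$-sum having length $g(N)$ rather than $N$. This is precisely where the hypothesis $g(x)\le x^{30/29-\varepsilon}$ is consumed --- the exponent $30/29$ is the threshold at which the available inputs (van der Corput bounds on $f$, Voronoi summation for $\lambda_F$, and the Rankin--Selberg bound) are just sufficient. By contrast the reduction, the estimation of $\mathcal M$, and the Type~I sums are comparatively routine, given the derivative hypotheses on $f$ and the classical analytic theory of $L(s,F)$.
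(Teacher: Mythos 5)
Your overall architecture matches the paper's: invert $g$, write the number of $n$ with $[g(n)]=m$ as $\bigl(f(m+1)-f(m)\bigr)+\psi(-f(m+1))-\psi(-f(m))$, extract the main term via the prime number theorem for the Hecke $L$-function and partial summation (your substitution $t=g(u)$ is exactly how the paper exploits $g(x)\ll x^{30/29-\varepsilon}$), and attack the $\psi$-part through Vaaler's approximation followed by a combinatorial prime-detecting identity leading to Type~I and Type~II sums. Your Type~II plan (Cauchy--Schwarz, diagonal versus off-diagonal, van der Corput on the differenced phase) is essentially the paper's treatment of $L_d$ in Lemma~\ref{Ldcondlemma}, modulo the paper's use of Hecke multiplicativity (Lemma~\ref{multhecke}) to strip $\lambda(mn)$ into the arbitrary coefficients before differencing.

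The genuine gap is in the Type~I sums, which is precisely where the paper's one new idea lives. You propose to bound $\sum_m\lambda_F(m)e\bigl(hf(dm)\bigr)$ by ``Voronoi summation for $\lambda_F$ and then stationary phase,'' but Voronoi summation applies to sums twisted by additive characters $e(am/q)$, not by a general smooth phase; to invoke it one must first approximate $e(hf(dm))$ by such characters on a Farey dissection (Jutila's method), and the paper states explicitly that this step is exactly what breaks down once $g$ is no longer a monomial $x^c$ --- it is the reason this paper only recovers $1<c<30/29$ instead of the $1<c<8/7$ of \cite{BaZh}. What the paper does instead (Lemma~\ref{Slemma}) is a Weyl shift on the smooth $\lambda$-sum, which converts the problem into shifted convolution sums $\sum_{m_1-m_2=q_1-q_2}\lambda(m_1)\lambda(m_2)$ against a weakly oscillating weight, bounded by a Duke--Friedlander--Iwaniec-type estimate (Lemma~\ref{shiftedconvolution}, via the delta-method and Voronoi). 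Without this ingredient, or a workable substitute, your Type~I estimate is an assertion rather than a proof. Relatedly, you locate the decisive difficulty and the origin of the exponent $30/29$ in the Type~II sums; in fact the Type~II sums are handled by plain Cauchy plus van der Corput, and the paper notes that the constraint $f(N)\ge N^{29/30+\varepsilon}$ arises from the Type~I analysis (Lemma~\ref{finalKdcondlemma}), i.e., from the strength of the shifted-convolution input.
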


For comparison, our main result in \cite{BaZh} was as follows. 

\begin{theorem} \label{oldmainresult}
Let $1<c<8/7$ and $\lambda_F(n)$ be the normalized $n$-th Fourier coefficient of a holomorphic cusp form $F$ for the full modular 
group  $SL_2(\intz)$.  Then there exists a constant $C>0$ depending on $F$ such that
\begin{equation} \label{themainresult}
\sum\limits_{\substack{n\le N\\ [n^c]\in \mathbbm{P}}} \lambda_F\left(\left[n^c\right]\right) \ll N\exp \left( -C \sqrt{\log N} \right),
\end{equation}
where the implied $\ll$-constant depends on $c$ and the cusp form $F$.
\end{theorem}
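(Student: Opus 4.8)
The plan is to detect the Piatetski--Shapiro condition arithmetically, split off a main term governed by the analytic theory of $L(s,F)=\sum_n\lambda_F(n)n^{-s}$, and then attack the remaining sawtooth error, in which all of the genuine difficulty resides. Put $\gamma=1/c$ and let $\psi(t)=t-\lfloor t\rfloor-\half$ denote the sawtooth function. Since $1<c<8/7<2$, each prime $p$ is of the form $[n^c]$ for at most one $n$, and the number of integers $n$ with $[n^c]=p$ equals $\lfloor -p^\gamma\rfloor-\lfloor-(p+1)^\gamma\rfloor$. Using that $\psi$ is odd, this yields the exact decomposition
\begin{equation*}
\begin{aligned}
\sum_{\substack{n\le N\\ [n^c]\in\mathbb{P}}}\lambda_F([n^c])
&=\sum_{p\le N^c}\lambda_F(p)\bigl((p+1)^\gamma-p^\gamma\bigr)\\
&\quad+\sum_{p\le N^c}\lambda_F(p)\bigl(\psi(p^\gamma)-\psi((p+1)^\gamma)\bigr)=:S_1+S_2,
\end{aligned}
\end{equation*}
up to boundary contributions from $p\asymp N^c$, which are $O(N^\varepsilon)$ by \eqref{Ramanujanpetersson}.

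The term $S_1$ is controlled by the prime number theorem for $L(s,F)$. Because $F$ is a cusp form, $L(s,F)$ is entire, of finite order, and nonvanishing on $\Re s=1$; the classical zero-free region then gives $\sum_{p\le x}\lambda_F(p)\log p\ll x\exp(-c_0\sqrt{\log x})$. Since the weight satisfies $(p+1)^\gamma-p^\gamma\asymp\gamma p^{\gamma-1}$ and is monotone, partial summation transfers this cancellation, and as $c\gamma=1$ we obtain $S_1\ll (N^c)^\gamma\exp(-C\sqrt{\log N})=N\exp(-C\sqrt{\log N})$, which already meets the target bound.

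For $S_2$ I would first replace $\psi$ by a truncated Fourier expansion of Vaaler type, writing $\psi(t)=\sum_{1\le|h|\le H}c_h e(ht)+\text{(error)}$ with $c_h\ll 1/|h|$ and a nonnegative majorant for the error whose contribution, summed against $\lambda_F(p)$, is admissible for a suitable height $H$. This reduces $S_2$ to estimating, for each $h\le H$, the Hecke-twisted exponential sum over primes $\sum_{p\le N^c}\lambda_F(p)e(hp^\gamma)$ (the shifted sum with $(p+1)^\gamma$ is treated identically). Detecting primes via the von Mangoldt function and applying a combinatorial identity of Vaughan or Heath--Brown, I would decompose $\sum_{n\sim M}\Lambda(n)\lambda_F(n)e(hn^\gamma)$ into Type~I sums, in which one variable runs smoothly over a long interval, and genuinely bilinear Type~II sums.

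The Type~I sums I would handle by $\mathrm{GL}_2$ Voronoi summation in the long smooth variable together with van der Corput / exponent-pair estimates for the oscillatory factor $e(hn^\gamma)$, whose derivatives behave regularly. The Type~II sums are the main obstacle. Since $\lambda_F(mk)$ does not factor multiplicatively, I would first reduce to coprime $m,k$ by M\"obius inversion of the Hecke relations, then apply Cauchy--Schwarz in one variable and extract cancellation from the phase differences $h m^\gamma\bigl(k^\gamma-(k')^\gamma\bigr)$ by Weyl differencing, the coefficient averages being tamed by the Rankin--Selberg bound $\sum_{n\le x}|\lambda_F(n)|^2\ll x$. Balancing the Type~I and Type~II estimates against the truncation height $H$ fixes the admissible range of $c$; it is precisely the strength of the bilinear estimate that imposes the restriction $1<c<8/7$, and optimizing this balance completes the proof.
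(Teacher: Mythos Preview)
This theorem is quoted from \cite{BaZh} and is not proved in the present paper; the paper only sketches the method of \cite{BaZh} in the introduction and then establishes the more general but numerically weaker Theorem~\ref{mainresult}. Your outline---arithmetic detection via the sawtooth, main term by the prime number theorem for $L(s,F)$ (Lemma~\ref{cuspatprimes}), Vaaler expansion, then a Vaughan/Heath--Brown decomposition into type~I and type~II sums---is exactly the common skeleton of both papers and matches Sections~4--6 here.

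Where your sketch parts from \cite{BaZh} is in the endgame for each type. For the type~I sums the paper says \cite{BaZh} used Jutila's method, which is indeed built on $\mathrm{GL}_2$ Voronoi, so your proposal is consistent there; note however that the present paper treats type~I quite differently, via a Weyl shift and the shifted-convolution bound of Lemma~\ref{shiftedconvolution} (see Lemma~\ref{Slemma}). For the type~II sums the paper explicitly contrasts the two approaches: \cite{BaZh} relied on ``sophisticated estimates for exponential sums with monomials'', while the present paper does exactly what you propose---Cauchy--Schwarz, Weyl differencing, then van der Corput (Lemma~\ref{Ldcondlemma}). That combination only delivers $f(N)\ge N^{8/9+\varepsilon}$, i.e.\ $c<9/8$, on the bilinear side, and the overall bottleneck of the present paper is the type~I estimate, giving $c<30/29$. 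So your closing sentence is off: plain Weyl differencing for type~II together with Voronoi for type~I does not reach $c<8/7$; the monomial-specific exponential-sum input of \cite{BaZh} is what pushes the range that far, and the threshold there reflects the balance of both type~I and type~II, not the bilinear estimate alone.
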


Some parts of \cite{BaZh} generalize directly in the present paper, while other parts cannot be carried 
over. We indicate the differences in the following description of our method for the proof of Theorem \ref{mainresult}. 
First, since every cusp form can be written as a linear combination of finitely many Hecke eigenforms, 
it will suffice to prove Theorem \ref{mainresult} for (normalized) Hecke eigenvalues. 
The advantages of working with Hecke eigenvalues are that they are multiplicative and real. Now we make a similar standard reduction 
of the problem to exponential sums with Hecke eigenvalues and the von Mangoldt function as in \cite{BaZh}. Then, just as in 
\cite{BaZh}, we
decompose the von Mangoldt function using a Vaughan-type identity, which leads to type I and type II sums.  
The type II sums are then treated by simply using van der Corput's method for exponential sums. In contrast, in \cite{BaZh}, we used sophisticated estimates
for exponential sums with monomials, which are not applicable in the present, more general situation. 
For the type I sums, we need to estimate smooth exponential sums with Hecke eigenvalues. Since we work with general functions 
$g(x)$ in place of $x^c$, it is not
possible to apply Jutila's method utilized in \cite{BaZh}. Instead, we estimate the said exponential sums using
a Weyl shift and a bound for shifted convolutions of Hecke eigenvalues with a weakly oscillating weight, a result analogous to that of 
W. Duke, J. B. Friedlander and 
H. Iwaniec in \cite{DFI} for the divisor function.  \newline

\noindent{\bf Notations.} The following notations and conventions are used throughout the paper.\newline
$e(z) = \exp (2 \pi i z) = e^{2 \pi i z}$. \newline
$\eta$ and $\varepsilon$ are small positive real numbers, where $\varepsilon$ may not be the same number in each occurance. \newline
$c>1$ is a fixed number and we set $\gamma=1/c$. \newline
$\lambda(n)$ denotes the normalized $n$-th Fourier coefficients of a Hecke eigenform for the full modular group.  In the sequel, we shall suppress the subscript $F$, used in the introduction, since the cusp form is fixed throughout the paper. \newline
$\Lambda(n)$ is the van Mangoldt function. \newline
$d(n)$ is the divisor function.\newline
$k\sim K$ means $K_1\le k \le K_2$ with $K/2\le K_1\le K_2\le 2K$. \newline
$f = O(g)$ or $f \ll g$ means $|f| \leq cg$ for some unspecified positive constant $c$. \newline
$f \asymp g$ means $f \ll g$ and $g \ll f$.\newline
$[x]$ denotes the largest integer not exceeding $x$, and $\psi(x)=x-[x]-1/2$ denotes the saw-tooth function.

\section{Preliminary lemmas}
For the estimation of exponential sums with Hecke eigenvalues, we need the following bound for shifted convolutions of Hecke eigenvalues. 
 
\begin{lemma} \label{shiftedconvolution} Set 
$$
D_g(a,b;h):=\sum\limits_{am\mp bn=h} \lambda(m)\lambda(n)g(am,bn),
$$
where $a,b\ge 1$, $(a,b)=1$, $h\not=0$ and $g$ is a smooth function on $\mathbb{R}^+\times \mathbb{R}^+$ satisfying
$$
x^iy^j g^{(ij)}(x,y) \ll \left(1+\frac{x}{X}\right)^{-1}\left(1+\frac{y}{Y}\right)^{-1}P^{i+j}
$$
with some $P,X,Y\ge 1$ for all $i,j\ge 0$, the implied constant depending on $i,j$ alone. Then
$$
D_g(a,b;h)\ll P^{5/4}\left(X+Y\right)^{1/4}(XY)^{1/4+\varepsilon},
$$ 
where the implied constant depends on $\varepsilon$ only.
\end{lemma}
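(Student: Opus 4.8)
The plan is to transcribe the proof of Duke, Friedlander and Iwaniec's theorem on the quadratic divisor problem \cite{DFI}, replacing their Voronoi summation formula for the divisor function by the Voronoi summation formula for the holomorphic Hecke eigenform $F$; since $F$ is cuspidal, the dual sums coming from Voronoi carry no term at the zero frequency, so -- in contrast to the divisor case -- no main term is produced. By the decay condition on $g$ we may assume $am \asymp X$ and $bn \asymp Y$ throughout, so that $am \mp bn - h$ runs over an interval of length $\ll X+Y$. The first step is to apply the delta-method of \cite{DFI} with parameter $Q \asymp (X+Y)^{1/2}$ to detect the equation $am \mp bn = h$; up to a negligible error this rewrites $D_g(a,b;h)$ as
$$\sum_{q\le Q}\sumstar_{c \bmod q} e\!\left(\frac{-ch}{q}\right)\sum_{m,n}\lambda(m)\lambda(n)\, e\!\left(\frac{c(am\mp bn)}{q}\right) g(am,bn)\, \Delta_q(am\mp bn - h),$$
where the smooth weights $\Delta_q$ are bounded (essentially by $(qQ)^{-1}$), supported on $|u| \ll qQ$, and satisfy $\partial_u^j \Delta_q(u) \ll (qQ)^{-j}$ times their size, at the cost of an extra summation over $q$.

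The second step is to separate the variables $m$ and $n$. The smooth weight $G_q(m,n):= g(am,bn)\,\Delta_q(am\mp bn - h)$, supported on $m \asymp X/a$ and $n \asymp Y/b$, has mixed derivatives governed by the oscillation scales $\Pi_1 := P + X/(qQ)$ and $\Pi_2 := P + Y/(qQ)$; applying two-dimensional Fourier inversion represents $G_q$ as an integral over product phases $e(\xi m + \eta n)$ with $|\xi| \ll (a/X)\Pi_1$, $|\eta| \ll (b/Y)\Pi_2$ and with an overall weight of $L^1$-norm $\ll P^{O(1)}(XY)^{\varepsilon}$ (the source of the power of $P$ in the final bound). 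It therefore suffices to estimate, uniformly in admissible $\xi,\eta$, a sum of the shape
$$\sum_{q\le Q} w_q \sumstar_{c\bmod q} e\!\left(\frac{-ch}{q}\right) S_1(q,c)\, S_2(q,c), \qquad |w_q| \ll (qQ)^{-1}P^{O(1)},$$
where $S_1(q,c) = \sum_m \lambda(m)\, e\!\left(\frac{acm}{q} + \xi m\right) U_1(m)$ and $S_2(q,c) = \sum_n \lambda(n)\, e\!\left(\mp\frac{bcn}{q} + \eta n\right) U_2(n)$, with fixed smooth bumps $U_1,U_2$ at scales $X/a$, $Y/b$.

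The third step applies the Voronoi summation formula for $F$ to $S_1$ and $S_2$. Reducing $ac/q$ to lowest terms $a'/q_1$ with $q_1 = q/(q,a)$ (note $(c,q)=1$), and likewise $q_2 = q/(q,b)$, Voronoi turns $S_1$ into a dual sum over $m' \ll N_1 := q_1^2 \Pi_1^2\, a/X$ and $S_2$ into a dual sum over $n' \ll N_2 := q_2^2 \Pi_2^2\, b/Y$, each against a Hankel-type transform that is negligible beyond its range, and each twisted by an additive character to modulus $q_1$, resp.\ $q_2$, depending on $c$ only through $\bar c$. Carrying out the sum over $c \bmod q$ then collapses to a Kloosterman sum $S(-h, \ast; q)$ (or, when $(q,ab)>1$, a closely related complete exponential sum), which I would bound by Weil's estimate $\ll q^{1/2+\varepsilon}(h, \ast, q)^{1/2}$. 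Finally, estimating the remaining triple sum over $q \le Q$ and the dual variables $m', n'$ by absolute values -- using the Weil bound, the Rankin--Selberg bound $\sum_{n\le x}|\lambda(n)|^2 \ll x^{1+\varepsilon}$ together with Cauchy--Schwarz, and the summation over $q$ -- and then inserting $Q \asymp (X+Y)^{1/2}$ and optimizing yields the asserted bound $P^{5/4}(X+Y)^{1/4}(XY)^{1/4+\varepsilon}$.

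The main obstacle, just as in \cite{DFI}, is this final accounting. One must keep the separation of variables fully quantitative (controlling the joint derivatives of $g$ and $\Delta_q$, pinning down the $L^1$-norm of the Fourier transform of $G_q$ as a function of $P$, and justifying all truncations), and then extract the sharp exponents from the post-Voronoi sum. The delicate contribution there is that of the ``near-diagonal'' terms, where the greatest common divisor entering Weil's bound is large or the dual arguments align so that the Kloosterman sum degenerates to a Ramanujan sum; it is the balance between the size of these terms and their count that is responsible for the exponents $1/4$ and the power $P^{5/4}$. One must also maintain full uniformity in $a$, $b$, and $h$, which forces one to track the divisors $(q,a)$, $(q,b)$, and $(q,h)$ through every stage of the argument.
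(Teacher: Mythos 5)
Your proposal follows exactly the route the paper takes: the paper's entire proof of this lemma is a one-sentence citation stating that the delta-method and Voronoi-summation arguments of \cite{DFI} for the divisor function carry over with $\lambda(n)$ in place of $d(n)$, and your outline is a faithful (and far more detailed) expansion of precisely that strategy, including the correct observation that cuspidality removes the main term. Since the paper supplies no further detail, there is nothing to compare beyond the level of elaboration.
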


\begin{proof} In \cite{DFI}, a result analogous to this one was proved for the divisor function $d(n)$ in place of $\lambda(n)$. The same arguments based on the delta-method and the Voronoi summation formula lead to the above result. 
\end{proof}

To reduce our problem to the estimation of exponential sums, we shall use the following approximation of the saw-tooth function $\psi(x)$ due to J. D. Vaaler.

\begin{lemma} [Vaaler] \label{Valer}
For $0<|t|<1$, let
$$
W(t)=\pi t(1-|t|)\cot \pi t +|t|.
$$
Fix a positive integer $J$. For $x\in \mathbb{R}$ define
$$
\psi^*(x):=-\sum\limits_{1\le |j|\le J} (2\pi ij)^{-1} W\left(\frac{j}{J+1}\right)e(jx)
$$
and
$$
\delta(x):=\frac{1}{2J+2}\sum\limits_{|j|\le J} \left(1-\frac{|j|}{J+1}\right)e(jx).
$$
Then $\delta$ is non-negative, and we have
$$
|\psi^*(x)-\psi(x)|\le \delta(x)
$$
for all real numbers $x$.
\end{lemma}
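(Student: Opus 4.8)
The statement is Vaaler's theorem on one-sided trigonometric approximation of the sawtooth function, and the plan is to recover it from the Beurling--Selberg extremal-function method. First I would reduce the claim to the following: there exist trigonometric polynomials $\psi^{+}$ and $\psi^{-}$ of degree at most $J$ with $\psi^{-}(x)\le\psi(x)\le\psi^{+}(x)$ for all real $x$ and with $\widehat{\psi^{+}}(0)=-\widehat{\psi^{-}}(0)=1/(2J+2)$. Granting this, put $\psi^{*}:=\tfrac12(\psi^{+}+\psi^{-})$ and $\delta:=\tfrac12(\psi^{+}-\psi^{-})$; then $\delta\ge0$ since $\psi^{+}\ge\psi^{-}$, and $|\psi^{*}(x)-\psi(x)|\le\delta(x)$ since $\psi(x)$ lies between $\psi^{-}(x)$ and $\psi^{+}(x)$, hence within $\delta(x)$ of their mean. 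What remains is to match the Fourier coefficients of $\psi^{*}$ and $\delta$ with the two displayed expressions, a direct computation with $\widehat{\psi^{\pm}}$.

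To produce $\psi^{\pm}$ I would periodize Beurling--Selberg extremal functions. Writing $\psi(x)+\tfrac12=\{x\}=\sum_{n\in\intz}\phi(x-n)$ with the compactly supported, integrable function $\phi(x)=x\cdot\mathbbm{1}_{[0,1)}(x)$, it suffices to construct entire functions $\Phi^{+}$ and $\Phi^{-}$ of exponential type $2\pi(J+1)$, integrable on $\mr$, with $\Phi^{-}(x)\le\phi(x)\le\Phi^{+}(x)$ on $\mr$ and $\int_{\mr}\bigl(\Phi^{\pm}-\phi\bigr)\,\dif x=\pm1/(2J+2)$. These are the Beurling--Selberg extremal majorant and minorant of $\phi$ for exponential type $2\pi(J+1)$, whose existence with this optimal $L^{1}$-excess is the substance of the theory of Beurling, Selberg and Vaaler (it is built by refining Beurling's extremal function for $\operatorname{sgn}$). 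Since $\Phi^{\pm}$ is integrable and of exponential type $2\pi(J+1)$, its Fourier transform is continuous and supported in $[-(J+1),J+1]$ by the Paley--Wiener theorem, hence vanishes at $\pm(J+1)$; so, by Poisson summation, the periodizations $\psi^{\pm}(x):=\sum_{n\in\intz}\Phi^{\pm}(x-n)$ are trigonometric polynomials of degree at most $J$, and they inherit the bound $\psi^{-}\le\psi\le\psi^{+}$ from the pointwise inequalities for $\Phi^{\pm}$ and $\phi$.

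Finally $\widehat{\psi^{\pm}}(j)=\widehat{\Phi^{\pm}}(j)$, so evaluating the explicitly known Fourier transform of Beurling's extremal function and simplifying gives, for $1\le|j|\le J$,
$$
\widehat{\psi^{\pm}}(j)=-(2\pi ij)^{-1}W\!\left(\frac{j}{J+1}\right)\pm\frac{1}{2J+2}\left(1-\frac{|j|}{J+1}\right),\qquad W(t)=\pi t(1-|t|)\cot\pi t+|t|,
$$
together with $\widehat{\psi^{\pm}}(0)=\pm1/(2J+2)$; the cotangent appears through the partial-fraction expansion underlying that Fourier transform. Substituting into $\psi^{*}=\tfrac12(\psi^{+}+\psi^{-})$ and $\delta=\tfrac12(\psi^{+}-\psi^{-})$ produces exactly the polynomials in the statement.

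I expect the main obstacle to be this last identification: proving that the smoothing factor multiplying the true Fourier coefficient $-(2\pi ij)^{-1}$ of $\psi$ is precisely $W(j/(J+1))$, with its cotangent, and not merely something of the form $1+O(|j|/J)$. This is the computational core of Vaaler's original argument, and I see no shortcut around reproducing the evaluation of the Fourier transform of Beurling's extremal function (equivalently, the variational determination of the extremal periodic approximants to $\psi$). A secondary point is that the entire majorant and minorant of $\phi$, though integrable, must be controlled near the jump of $\phi$ at $x=1$ so that periodization genuinely preserves the two-sided bounds.
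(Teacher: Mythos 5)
The paper offers no proof of this lemma: it is quoted verbatim as Theorem A.6 of Graham--Kolesnik, going back to Vaaler, so there is no in-paper argument to compare against. Your outline is a faithful sketch of the proof that lives in those references: reduce to a majorant/minorant pair $\psi^{\pm}$ of degree at most $J$ with mean values $\pm 1/(2J+2)$, set $\psi^{*}=\tfrac12(\psi^{+}+\psi^{-})$ and $\delta=\tfrac12(\psi^{+}-\psi^{-})$, and produce $\psi^{\pm}$ by periodizing extremal entire functions of exponential type $2\pi(J+1)$. Your observation that the Fourier transform of such an integrable function is continuous and supported in $[-(J+1),J+1]$, hence vanishes at the endpoints so that the periodization has degree $J$ rather than $J+1$, is exactly the right point to make. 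Two remarks. First, the non-negativity of $\delta$ requires no construction at all: as defined in the statement it is $1/(2J+2)$ times the Fej\'er kernel, i.e.\ $\frac{1}{2(J+1)^2}\bigl|\sum_{j=0}^{J}e(jx)\bigr|^2\ge 0$. Second, and more substantively, the two steps you defer --- the existence of entire majorants and minorants with $L^1$-excess exactly $1/(2J+2)$, and the evaluation of their Fourier transforms yielding the factor $W(j/(J+1))$ with its cotangent --- are not routine simplifications but the entire content of Vaaler's theorem; his construction in fact proceeds through an interpolation formula for functions of exponential type rather than by majorizing $x$ on $[0,1)$ directly, and the cotangent emerges from summing the resulting interpolation series. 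So your proposal correctly identifies the architecture and honestly locates the hard core, but without importing that computation from the literature it establishes only the existence of \emph{some} admissible $\psi^{*}$ and $\delta$, not the specific polynomials in the statement --- which is, in fairness, no less than what the paper itself does by citing the result.
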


\begin{proof} This is Theorem A6 in \cite{GK} and has its origin in \cite{vaal}. \end{proof}

At several places of the paper, we shall use the following classical estimate for exponential sums due to van der Corput.

\begin{lemma} [van der Corput] \label{vandercorput} Suppose that $f$ is a real valued function with two continuous derivatives on $[N,N_1]$. 
Suppose also that there is some $\lambda>0$ and some $\alpha\ge 1$ such that
$$
\lambda \le |f''(x)| \le \alpha\lambda
$$ 
on $[N,N_1]$, where $N_1\ge N+1$. Then
$$
\sum\limits_{N<n\le N_1} e(f(n)) \ll \alpha (N_1-N)\lambda^{1/2}+\lambda^{-1/2}.
$$
\end{lemma}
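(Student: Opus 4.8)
The plan is to prove this by the classical elementary argument, reducing everything to the first-derivative (Kuzmin--Landau) test for exponential sums together with the triangle inequality. First I would normalise: since $f''$ is continuous and $|f''|\ge\lambda>0$, it has constant sign on $[N,N_1]$, and replacing $f$ by $-f$ (which conjugates the sum and preserves its modulus) we may assume $f''\ge\lambda>0$ throughout; in particular $f'$ is strictly increasing. The single analytic input I would invoke is the Kuzmin--Landau inequality: if $h$ is real-valued with $h'$ continuous and monotonic on an interval, $h'$ confined to one unit interval, and $\|h'(x)\|\ge\delta$ there (with $\|t\|$ the distance from $t$ to the nearest integer), then $\sum_{a<n\le b}e(h(n))\ll\delta^{-1}$.

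Next I would dispose of large $\lambda$ at once: if $\lambda>1/4$ then $\alpha\lambda^{1/2}\ge\tfrac12$, so the trivial bound $\sum_{N<n\le N_1}e(f(n))\ll N_1-N\le\alpha(N_1-N)\lambda^{1/2}$ already suffices, and we may assume $\lambda\le1/4$. I would then partition $[N,N_1]$ according to the integer parts of $f'$: for each integer $m$ set $I_m=\{x\in[N,N_1]:m\le f'(x)<m+1\}$, which are intervals partitioning $[N,N_1]$ since $f'$ increases. Crucially, the upper bound $|f''|\le\alpha\lambda$ controls their number, for the range of $f'$ is $f'(N_1)-f'(N)=\int_N^{N_1}f''\le\alpha\lambda(N_1-N)$, so at most $\alpha\lambda(N_1-N)+1$ of the $I_m$ are nonempty. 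On $I_m$ put $g(x)=f(x)-mx$, so that $e(f(n))=e(g(n))$ for integers $n$ while $g'=f'-m$ increases through $[0,1)$. With $\delta=\lambda^{1/2}\ (\le\tfrac12)$ I would split $I_m$ into the middle range $g'\in[\delta,1-\delta]$ and the two end ranges $g'\in[0,\delta)$ and $g'\in(1-\delta,1)$. On the middle range $\|f'\|\ge\delta$ and $f'$ stays in the single unit interval $(m,m+1)$, so Kuzmin--Landau bounds the contribution by $\ll\delta^{-1}=\lambda^{-1/2}$. Each end range has length $\le\delta/\lambda$ because $g''=f''\ge\lambda$, hence contains $\ll\delta/\lambda+1\ll\lambda^{-1/2}$ integers, and the trivial bound controls it by the same amount. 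Thus each $I_m$ contributes $\ll\lambda^{-1/2}$, and summing over the $\ll\alpha\lambda(N_1-N)+1$ nonempty intervals gives
\[
\sum_{N<n\le N_1}e(f(n))\ll\big(\alpha\lambda(N_1-N)+1\big)\lambda^{-1/2}=\alpha(N_1-N)\lambda^{1/2}+\lambda^{-1/2},
\]
which is exactly the asserted estimate.

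The main obstacle is really the Kuzmin--Landau inequality itself, on which the whole argument rests; I would establish it by writing each $e(h(n))$ as a telescoping quotient and summing by parts, the denominators $|e(h(n+1)-h(n))-1|\asymp\|h'\|\ge\delta$ producing the factor $\delta^{-1}$, with the monotonicity of $h'$ ensuring that the total variation of the resulting weights is bounded. The remaining delicate point is the bookkeeping at the endpoints of the $I_m$, where $\|f'\|\to0$ and the first-derivative test degenerates; this is precisely the regime that must be absorbed by the trivial bound, and the choice $\delta=\lambda^{1/2}$ is what balances $\delta^{-1}$ against $\delta/\lambda$ and reconciles the two regimes. Alternatively, one could route the proof through the second-derivative test for exponential integrals, $\int_a^b e(f)\,\dif x\ll\lambda^{-1/2}$, combined with Poisson summation, but the elementary partition above is cleaner and avoids any saddle-point analysis.
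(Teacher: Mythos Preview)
Your argument is correct and is precisely the standard elementary proof of the second-derivative test: reduce to $f''>0$, partition according to $\lfloor f'\rfloor$, apply Kuzmin--Landau on the bulk of each $I_m$ and the trivial bound near the endpoints, then balance with $\delta=\lambda^{1/2}$. The only cosmetic point is that the number of nonempty $I_m$ is bounded by $\alpha\lambda(N_1-N)+2$ rather than $+1$ (the range of $f'$ can straddle an extra integer), but this is of course absorbed by the $\ll$.

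The paper itself gives no proof at all; it simply cites Theorem~2.2 of Graham--Kolesnik's monograph on van der Corput's method. Your proposal is in fact the argument that appears there (and in essentially every treatment of the subject), so you have supplied what the paper outsources to a reference. Your closing remark about the alternative route via Poisson summation and the exponential-integral second-derivative test is also accurate, though that approach is more natural when one wants asymptotics rather than just upper bounds.
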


\begin{proof} This is Theorem 2.2. in \cite{GK}. 
\end{proof}

The following is the prime number theorem for Hecke eigenvalues which is used to bound the main term.

\begin{lemma} \label{cuspatprimes}
There exists a positive constant $C$, such that
\begin{equation*}
\sum\limits_{n\le N} \Lambda(n)\lambda(n) \ll N \exp \left( - C \sqrt{\log N} \right),
\end{equation*}
where the implied $\ll$-constant and the constant $C$ depend on the cusp form.
\end{lemma}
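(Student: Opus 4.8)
The plan is to deduce this from the analytic theory of the Hecke $L$-function attached to $F$. Write
$L(s) := \sum_{n=1}^{\infty} \lambda(n) n^{-s} = \prod_p (1-\alpha_p p^{-s})^{-1}(1-\beta_p p^{-s})^{-1}$ for $\Re s > 1$, where $\alpha_p + \beta_p = \lambda(p)$, $\alpha_p\beta_p = 1$, and $|\alpha_p| = |\beta_p| = 1$ by \eqref{Ramanujanpetersson}. Its logarithmic derivative is $-L'/L(s) = \sum_n \Lambda_F(n) n^{-s}$ with $\Lambda_F(p^k) = (\log p)(\alpha_p^k + \beta_p^k)$ and $\Lambda_F(n) = 0$ otherwise; in particular $\Lambda_F(p) = (\log p)\lambda(p) = \Lambda(p)\lambda(p)$, while on prime powers $p^k$ with $k \ge 2$ both $\Lambda(p^k)\lambda(p^k)$ and $\Lambda_F(p^k)$ are $\ll (\log p)\,p^{k\varepsilon}$. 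Hence
$$
\sum_{n\le N}\Lambda(n)\lambda(n) = \sum_{n\le N}\Lambda_F(n) + O\!\left(N^{1/2+\varepsilon}\right),
$$
and it suffices to bound $\sum_{n\le N}\Lambda_F(n)$.

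I would then invoke the standard analytic properties of $L(s)$: since $F$ is a cusp form, $L(s)$ extends to an entire function of finite order satisfying a functional equation relating $s$ to $1-s$ (with archimedean factor $\Gamma\!\left(s + (\kappa-1)/2\right)$), so that $L(s)$ has polynomial growth in vertical strips by Phragm\'en--Lindel\"of. The essential input is the de la Vall\'ee Poussin zero-free region: there is $c_0 > 0$ with $L(s) \ne 0$ for $\Re s \ge 1 - c_0/\log(|\Im s| + 3)$, and in particular no real zero close to $s = 1$ (for the fixed self-dual $L(s)$ the absence of a Landau--Siegel zero is known for $GL_2$ $L$-functions over $\mq$; cf.\ the work of Hoffstein--Lockhart and Goldfeld--Hoffstein--Lieman). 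Together with a Borel--Carath\'eodory argument this yields $L'/L(s) \ll \left(\log(|\Im s| + 3)\right)^2$ slightly inside the region.

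With these facts in hand the argument is the classical one for the prime number theorem. Apply the truncated Perron formula to $-L'/L(s)$ against $x^s/s$ with cutoff height $T$, recovering $\sum_{n\le N}\Lambda_F(n)$ with an error of size $O\!\left(N(\log N)^2/T\right) + O(N^{\varepsilon})$. Shift the contour to $\Re s = 1 - c_0/(2\log T)$; because $L(s)$ is \emph{entire} there is no residue at $s = 1$ and hence no main term, so the estimate comes entirely from the shifted vertical line and the two horizontal segments. Bounding the integrand there via the zero-free region and the $L'/L$ bound gives
$$
\sum_{n\le N}\Lambda_F(n) \ll N \exp\!\left(-\frac{c_1 \log N}{\log T}\right) + \frac{N(\log N)^2}{T},
$$
and the choice $\log T = \sqrt{\log N}$ balances the two contributions to produce the stated bound $N\exp\!\left(-C\sqrt{\log N}\right)$.

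The main obstacle is not the contour integration, which is entirely routine, but securing the zero-free region near the real axis---equivalently, ruling out an exceptional real zero of the self-dual $L(s)$. Since the implied constant and $C$ are permitted to depend on the fixed form $F$, an ineffective treatment would already suffice: a hypothetical Siegel zero $\beta < 1$ is a fixed number, contributing $\ll N^{\beta}$, which is still $\ll N\exp(-C\sqrt{\log N})$ for a (form-dependent) $C$. In fact the nonexistence of such a zero for holomorphic $GL_2$ $L$-functions is known unconditionally, so the resulting bound is effective. All remaining ingredients---analytic continuation, functional equation, convexity estimates, and the de la Vall\'ee Poussin region away from the real axis---are standard; see, e.g., Iwaniec--Kowalski, \emph{Analytic Number Theory}.
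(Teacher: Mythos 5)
Your argument is correct and is essentially the standard proof of the prime number theorem for the Hecke $L$-function $L(s)=\sum\lambda(n)n^{-s}$; the paper itself simply cites Theorem 5.12 of Iwaniec--Kowalski, whose proof proceeds exactly along the lines you describe (Perron, contour shift, de la Vall\'ee Poussin zero-free region, with the absence of a main term because $L(s)$ is entire, and with any putative exceptional zero harmless since the form is fixed). So you have reproved the cited input rather than taken a different route.
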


\begin{proof} This is a special case of the more general Theorem 5.12 in \cite{HIEK}. \end{proof}

To bound the error term, we shall see that it suffices to prove that
\begin{equation} \label{goal0}
\sum\limits_{n\sim N} \Lambda(n)r(n) =O\left(N^{1-\eta}\right)
\end{equation}
for a some fixed $\eta>0$, where $r$ is a certain function involving $\lambda(n)$ and 
an exponential sum.  
The following lemma reduces the above sum containing the von Mangoldt function to so-called type I and type II sums.

\begin{lemma} [Heath-Brown] \label{Heath}
Let $r(n)$ be a complex-valued function defined on the natural numbers.  Suppose that $u$, $v$ and $z$ are real parameters satisfying the conditions
\[ 3\le u<v<z<2N,\ z-1/2\in \mathbb{N},\ z\ge 4u^2,\ N\ge 32z^2u,\ v^3\ge 64N. \]
Suppose further that $1\le Y\le N$ and $XY=N$. Assume that $a_m$ and $b_n$ are complex numbers.  We write
\begin{equation} \label{Kdef}
 K:=\mathop{\sum_{m\sim X} \sum_{ n\sim Y}}_{mn\sim N} a_m r(mn)
\end{equation}
and
\begin{equation} \label{Ldef}
L:=\mathop{\sum_{m\sim X} \sum_{n\sim Y}}_{mn\sim N} a_m b_n r(mn).
\end{equation}
Then the estimate \eqref{goal0} holds if we uniformly have
\[ K\ll N^{1-2\eta}\ \  \mbox{ for } Y\ge z \; \mbox{and any complex} \; a_m\ll 1 \]
and
\[ L\ll N^{1-2\eta}\ \  \mbox{ for } u\le Y\le v \; \mbox{and any complex} \; a_m,b_n\ll 1. \]
\end{lemma}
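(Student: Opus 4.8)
The plan is to invoke the Heath-Brown identity for the von Mangoldt function and then to sort the resulting sums into the two advertised shapes. First I would recall that Heath-Brown's identity expresses $\Lambda(n)$ for $n\le 2N$ as a sum of $O(1)$ terms, each of the form $\sum_{n_1\cdots n_{2k}=n}\mu(n_1)\cdots\mu(n_k)(\log n_{2k})\prod$ with suitable truncations of the variables $n_j$ to dyadic ranges; the key quantitative input is the choice of the truncation parameter so that each $n_j$ with odd index lies below a power of $N$ determined by $z$ and $u$. Substituting this into $\sum_{n\sim N}\Lambda(n)r(n)$ and splitting each variable into dyadic blocks, I obtain a bounded number of multilinear sums $\sum a_{m_1}\cdots a_{m_t} r(m_1\cdots m_t)$ subject to $m_1\cdots m_t\sim N$, where the coefficients are either $1$, a M\"obius function, or a logarithm (hence all $\ll N^{\varepsilon}$ after the usual tidying, or exactly $\ll 1$ after pulling the logarithm out by partial summation over the largest variable).

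The second step is the combinatorial lemma that, given any collection of dyadic parameters $M_1,\dots,M_t$ with $M_1\cdots M_t\asymp N$ and each $M_j$ with odd $j$ at most $N^{1/3}$ (say), one can always group the variables into two clusters whose products $X$ and $Y$ satisfy either $Y\ge z$ (a "type I" situation, where at least one side is a single smooth variable carrying no arithmetic weight, giving the $K$-sum) or $u\le Y\le v$ (the "type II" situation, giving the $L$-sum). This is a purely numerical argument: one orders the $M_j$, multiplies them together greedily until the partial product first exceeds $u$, and checks using the constraints $z\ge 4u^2$, $v^3\ge 64N$, $N\ge 32z^2u$ that the resulting split lands in one of the two admissible windows; the inequalities in the hypothesis are precisely what is needed to make this case analysis close without gaps. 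The coefficients $a_m$ in the $K$-sum arise when the "short" side is a product of $\mu$'s and logs over variables all below $z$, so that no cancellation in the long (type I) variable is assumed; the $b_n$ in the $L$-sum absorb everything on the second cluster.

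Finally, I would assemble the bound: if $K\ll N^{1-2\eta}$ holds uniformly for $Y\ge z$ and $L\ll N^{1-2\eta}$ holds uniformly for $u\le Y\le v$, then summing the $O(1)$ dyadic pieces and accounting for the $O(\log N)$ dyadic decompositions in each of the $O(1)$ variables gives $\sum_{n\sim N}\Lambda(n)r(n)\ll N^{1-2\eta}(\log N)^{O(1)}\ll N^{1-\eta}$, which is \eqref{goal0}. The main obstacle is really bookkeeping rather than depth: one must verify that the greedy grouping always produces a split obeying one of the two range conditions, and in particular that the "gap" between $v$ and $z$ is bridged — this is exactly where $z\ge 4u^2$ and $v^3\ge 64N$ enter, ensuring that a product of small variables that overshoots $v$ must actually have overshot $z$ as well. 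Since this is a known decomposition (due to Heath-Brown, and used in essentially this packaged form in many Piatetski-Shapiro-type arguments), I would present the grouping argument in detail and refer to the standard source for the Heath-Brown identity itself.
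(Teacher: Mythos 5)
The paper does not prove this lemma at all: it is quoted verbatim (up to dyadic normalization) as Lemma 3 of Heath-Brown's paper on the Piatetski-Shapiro prime number theorem, and the proof there proceeds from a Vaughan-type identity for $\Lambda(n)$ with the two cut-off parameters $v$ and $z$, followed by a case analysis of the resulting ranges. Your plan instead starts from the Heath-Brown $k$-fold convolution identity with odd-indexed variables truncated at $N^{1/3}$. That is a legitimate alternative engine for producing type I/type II decompositions, but it does not prove \emph{this} lemma: the hypotheses $z-1/2\in\mathbb{N}$, $z\ge 4u^2$, $N\ge 32z^2u$, $v^3\ge 64N$ are artifacts of Heath-Brown's specific Vaughan-style decomposition (the half-integer condition on $z$, for instance, exists to make the cut $c\le z$ unambiguous for integers and never appears in your sketch), and a Heath-Brown-identity argument would yield a statement with different parameter constraints. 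Asserting that "the inequalities in the hypothesis are precisely what is needed to make this case analysis close without gaps" is not a proof of that fact, and that verification is the entire content of the lemma.

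Concretely, the gap is the greedy grouping step. You must exhibit, for every admissible tuple of dyadic sizes $M_1\cdots M_t\asymp N$ arising from your identity, a partition into two clusters with product sizes $X$, $Y$, $XY=N$, such that either $Y\ge z$ with the long variable free of arithmetic weight (the $K$-shape) or $u\le Y\le v$ (the $L$-shape); and you must show that the listed inequalities rule out the bad case where every grouping lands in the gap $(v,z)$. As written, nothing in the proposal establishes this, nor that the coefficients produced by pulling out the logarithm and collapsing $\mu$-weighted variables are uniformly $\ll 1$ rather than $\ll N^{\varepsilon}$ (the lemma as stated demands $a_m,b_n\ll 1$, though the loss of $N^{\varepsilon}$ is harmless against $N^{1-2\eta}$ versus $N^{1-\eta}$ in \eqref{goal0}). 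Either carry out the combinatorial argument in full for your identity -- in which case you should restate the lemma with the hypotheses your argument actually needs -- or do what the paper does and cite Heath-Brown's Lemma 3 directly.
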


\begin{proof} This is a consequence of Lemma 3 in \cite{Hea2}. \end{proof}

To separate the variables $m$ an $n$ appearing in the previous Lemma \ref{Heath}, we shall use the following lemmas. The first of them is the multiplicative property of Hecke eigenvalues, and the second of them is a variant of Perron's formula.

\begin{lemma} \label{multhecke}
Hecke eigenvalues are multiplicative and they satisfy the
following relation.
\begin{equation*}
\lambda (mn) = \sum_{d| \gcd(m,n)} \mu (d) \lambda \left( \frac{m}{d} \right) \lambda \left( \frac{n}{d} \right).
\end{equation*}
\end{lemma}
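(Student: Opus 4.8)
The plan is to deduce both assertions from the classical Hecke multiplicativity relation for the eigenvalues of a normalized Hecke eigenform on $SL_2(\intz)$, namely
$$
\lambda(m)\lambda(n)=\sum_{d\mid\gcd(m,n)}\lambda\!\left(\frac{mn}{d^2}\right),
$$
which is part of the standard theory of Hecke operators (it is the image under the eigenform of the operator identity $T_mT_n=\sum_{d\mid(m,n)}d\,T_{mn/d^2}$, after normalizing so that $\lambda(1)=1$). Specializing to $\gcd(m,n)=1$ gives $\lambda(mn)=\lambda(m)\lambda(n)$ immediately, so the multiplicativity statement is nothing but the coprime case and requires no separate treatment.

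For the stated convolution identity I would Möbius-invert the Hecke relation directly. Inserting the Hecke relation, applied with arguments $m/d$ and $n/d$, into the right-hand side of the claimed formula gives
$$
\sum_{d\mid\gcd(m,n)}\mu(d)\,\lambda\!\left(\tfrac{m}{d}\right)\lambda\!\left(\tfrac{n}{d}\right)
=\sum_{d\mid\gcd(m,n)}\mu(d)\sum_{e\mid\gcd(m/d,\,n/d)}\lambda\!\left(\frac{mn}{(de)^2}\right).
$$
The pairs $(d,e)$ with $d\mid\gcd(m,n)$ and $e\mid\gcd(m/d,n/d)$ are in bijection with the pairs $(D,d)$ satisfying $D\mid\gcd(m,n)$ and $d\mid D$, via $D=de$; the only point needing a word of care is checking this bijection in both directions, i.e.\ that $d\mid\gcd(m,n)$ together with $e\mid\gcd(m/d,n/d)$ is equivalent to $de\mid\gcd(m,n)$ together with $d\mid de$. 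Regrouping accordingly,
$$
\sum_{d\mid\gcd(m,n)}\mu(d)\,\lambda\!\left(\tfrac{m}{d}\right)\lambda\!\left(\tfrac{n}{d}\right)
=\sum_{D\mid\gcd(m,n)}\lambda\!\left(\frac{mn}{D^2}\right)\sum_{d\mid D}\mu(d)
=\lambda(mn),
$$
since $\sum_{d\mid D}\mu(d)$ vanishes unless $D=1$.

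As an alternative to the inversion, one can observe that both sides of the asserted identity are multiplicative in the pair $(m,n)$ — the left side because $\lambda$ is multiplicative, the right side because $\gcd$ and $\mu$ factor over coprime blocks — so it suffices to check the identity for $m=p^a$, $n=p^b$ with $a\le b$, where it reduces to $\lambda(p^{a+b})=\lambda(p^a)\lambda(p^b)-\lambda(p^{a-1})\lambda(p^{b-1})$; this in turn follows at once from the prime-power form $\lambda(p^a)\lambda(p^b)=\sum_{j=0}^{a}\lambda(p^{a+b-2j})$ of the Hecke relation by subtracting the corresponding expansion of $\lambda(p^{a-1})\lambda(p^{b-1})$. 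Either way there is no real obstacle: the sole input is the Hecke relation, which is classical, and the lemma is recorded only because it is exactly the tool needed to separate the variables $m$ and $n$ in the type I and type II sums produced by Lemma \ref{Heath}.
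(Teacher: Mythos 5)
Your argument is correct and is exactly the paper's approach: the paper's proof consists of the single remark that the lemma ``follows by applying the M\"obius inversion formula to the product formula for the Hecke eigenvalues'' (citing Proposition 14.9 of \cite{HIEK}), and your computation with the substitution $D=de$ together with $\sum_{d\mid D}\mu(d)=0$ for $D>1$ is precisely that inversion carried out in detail. The alternative verification at prime powers is a fine consistency check but adds nothing beyond the paper's intended one-line argument.
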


\begin{proof} This Lemma follows by applying the M\"{o}bius inversion formula to the product formula for the Hecke eigenvalues.  See, for example, Proposition 14.9 of \cite{HIEK}. \end{proof}

\begin{lemma} \label{Perron}
Let $0<M\le N<\nu N<\kappa M$ and let $a_m$ be complex numbers with $|a_m|\le 1$. We then have
\begin{equation} \label{perroneq}
\sum\limits_{N<n<\nu N} a_n = \frac{1}{2\pi} \int\limits_{-M}^M \left(\sum\limits_{M<m<\kappa M} a_m m^{-it}\right) N^{it}(\nu^{it}-1)t^{-1} dt \ + \ O(\log(2+M)),
\end{equation}
where the implied $O$-constant depends only on $\kappa$.
\end{lemma}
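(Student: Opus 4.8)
The plan is to open up the inner sum in \eqref{perroneq}, interchange the (finite) sum over $m$ with the integral over $t$, and reduce the identity to the classical Dirichlet integral. Using $N^{it}(\nu^{it}-1)m^{-it}=(\nu N/m)^{it}-(N/m)^{it}$, the right-hand side of \eqref{perroneq} equals $\sum_{M<m<\kappa M}a_m\,\mathcal{I}(m)$, where $\mathcal{I}(m)$ denotes the corresponding one-dimensional oscillatory integral over $|t|\le M$; the interchange is justified because $\nu^{it}-1\ll\min(1,|t|)$ makes each $t$-integrand bounded near $t=0$ and $\ll 1/|t|$ for large $t$, while the $m$-sum is finite. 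I would also note at the outset that $0<M\le N<\nu N<\kappa M$ forces $1<\nu<\kappa$ and $M\le N\le\kappa M$, so that $m$, $N$, and $\nu N$ are all $\asymp M$ with constants depending only on $\kappa$, and every integer $n$ with $N<n<\nu N$ lies in $(M,\kappa M)$ --- these are the $m$ that will contribute the main term.

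Next I would prove the core estimate: for real $u\neq 0$,
\[
\frac{1}{2\pi i}\int_{-M}^{M}\frac{e^{itu}-1}{t}\,\dif t=\tfrac12\,\mathrm{sgn}(u)+O\!\left(\min\!\left(1,\frac{1}{M|u|}\right)\right),
\]
which follows from $\int_{0}^{X}(\sin v)/v\,\dif v=\pi/2+O(1/X)$ (integration by parts for the tail, plus the trivial bound for $X\le 1$); in particular the left-hand side is $O(1)$ uniformly. Applying this with $u=\log(\nu N/m)$ and with $u=\log(N/m)$ gives, for every integer $m\notin\{N,\nu N\}$,
\[
\mathcal{I}(m)=\mathbbm{1}_{(N,\nu N)}(m)+O\!\left(\frac{1}{M\,|\log(\nu N/m)|}+\frac{1}{M\,|\log(N/m)|}\right),
\]
since the two signum terms cancel unless $N<m<\nu N$, in which case they add up to $1$. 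Hence $\sum_{M<m<\kappa M}a_m\,\mathcal{I}(m)=\sum_{N<n<\nu N}a_n+O(\mathcal{E})$, where $\mathcal{E}$ is the sum of the displayed error terms over $M<m<\kappa M$, with the $O(1)$ integers $m$ lying within distance $1$ of $N$ or of $\nu N$ omitted and estimated instead by the uniform bound $\mathcal{I}(m)=O(1)$; those few exceptional $m$ contribute $O(1)$ to each side and are harmless.

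Finally I would bound $\mathcal{E}\ll\log(2+M)$. As $m$, $N$, $\nu N\asymp M$, for $m$ with $|m-N|\ge 1$ one has $|\log(N/m)|\gg|m-N|/M$, so that term is $\ll|m-N|^{-1}$, and similarly the $\nu N$-term is $\ll|m-\nu N|^{-1}$ for $|m-\nu N|\ge 1$; summing the harmonic series $\sum_{1\le|m-N|\ll M}|m-N|^{-1}\ll\log M$ and the corresponding sum near $\nu N$ gives $\mathcal{E}\ll\log(2+M)$ with an implied constant depending only on $\kappa$, which is \eqref{perroneq}. The only step that genuinely requires care is this endpoint analysis --- isolating the finitely many integers $m$ for which $\log(\nu N/m)$ or $\log(N/m)$ is too small for the error bound to be informative. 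This is exactly the familiar ``half the endpoint term'' subtlety of truncated Perron-type formulae, and it is harmless here because the sum on the left of \eqref{perroneq} runs over the open interval $(N,\nu N)$.
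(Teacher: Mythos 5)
The paper offers no argument for this lemma at all --- its ``proof'' is the single sentence ``This is Lemma 6 in [EFHI]'' --- so your self-contained derivation is necessarily a different route: it supplies the proof the paper outsources, and it is the standard one for this separation-of-variables device (expand the Dirichlet polynomial, interchange the finite $m$-sum with the $t$-integral, reduce to the truncated sine integral $\int_0^X \frac{\sin v}{v}\,\dif v=\frac{\pi}{2}+O(\min(1,1/X))$, and control the integers $m$ near the endpoints $N$ and $\nu N$ by a harmonic-series bound). Every step of your argument is sound, including the endpoint analysis, and the error term $O(\log(2+M))$ with a constant depending only on $\kappa$ comes out exactly as you say. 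One point deserves attention, and your own computation exposes it: with the normalization actually printed in the lemma, namely $\frac{1}{2\pi}\int_{-M}^{M}(\cdots)\,t^{-1}\,\dif t$, the main term is $i\sum_{N<n<\nu N}a_n$, not $\sum_{N<n<\nu N}a_n$. Indeed, the real part $(\cos(tu)-1)/t$ of the basic integrand is odd and integrates to zero, while the even part contributes $2i\,\mathrm{sgn}(u)\int_0^{M|u|}\frac{\sin v}{v}\,\dif v\to i\pi\,\mathrm{sgn}(u)$; this is precisely why your core estimate correctly carries the prefactor $\frac{1}{2\pi i}$. Carried through consistently, your argument proves the identity with $\frac{1}{2\pi i}$ (equivalently, with $(it)^{-1}$ in place of $t^{-1}$ in the integrand), which is the form the cited lemma should take; the statement as transcribed in the paper is off by a factor of $i$, and your write-up silently switches between the two normalizations when passing from $\mathcal{I}(m)$ to the core estimate. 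This is immaterial for the application --- the lemma is used only to separate variables, and a unimodular constant is absorbed into the coefficients --- but if you present the proof you should either insert the missing $i$ into the statement or flag the discrepancy explicitly.
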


\begin{proof} This is Lemma 6 in \cite{EFHI}. \end{proof}

To bound a certain error term, we shall need the following.

\begin{lemma} \label{deltaest}
Assume that $1\le N<N+1\le N_1\le 2N$. Define the function $\delta$ as in Lemma \ref{Valer}. If $f$ satisfies 
$$
f(x)\asymp f(N), \quad f'(x)\asymp \frac{f(N)}{N}, \quad f''(x) \asymp \frac{f(N)}{N^2} \quad \mbox{for } N<x\le N_1,
$$
then
$$
\sum\limits_{N<n\le N_1} \delta\left(-f(n)\right)\ll J^{-1}N+J^{1/2}f(N)^{1/2}+J^{-1/2}Nf(N)^{-1/2}.
$$
\end{lemma}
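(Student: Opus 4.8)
The plan is to expand $\delta$ into its Fourier series via the definition in Lemma \ref{Valer}, interchange the order of summation, and estimate the resulting exponential sums with van der Corput's bound (Lemma \ref{vandercorput}). Writing
\[
\sum_{N<n\le N_1}\delta(-f(n))=\frac{1}{2J+2}\sum_{|j|\le J}\left(1-\frac{|j|}{J+1}\right)\sum_{N<n\le N_1}e(-jf(n)),
\]
the $j=0$ term contributes $(N_1-N)/(2J+2)\ll J^{-1}N$, which accounts for the first term in the claimed bound. For the terms with $1\le|j|\le J$ I would apply Lemma \ref{vandercorput} to the inner sum with the phase $x\mapsto -jf(x)$: its second derivative is $-jf''(x)\asymp jf(N)/N^2$ by hypothesis, so one may take $\lambda\asymp jf(N)/N^2$ and $\alpha\asymp 1$ (the ratio of max to min of $|f''|$ is bounded since $f''(x)\asymp f(N)/N^2$ uniformly). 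This yields
\[
\sum_{N<n\le N_1}e(-jf(n))\ll N\left(\frac{jf(N)}{N^2}\right)^{1/2}+\left(\frac{jf(N)}{N^2}\right)^{-1/2}\ll j^{1/2}\frac{f(N)^{1/2}}{1}+j^{-1/2}\frac{N}{f(N)^{1/2}}.
\]

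Summing over $1\le |j|\le J$ and dividing by $2J+2$, the first piece contributes
\[
\frac{1}{J}\sum_{j\le J}j^{1/2}f(N)^{1/2}\ll \frac{1}{J}\cdot J^{3/2}f(N)^{1/2}=J^{1/2}f(N)^{1/2},
\]
and the second piece contributes
\[
\frac{1}{J}\sum_{j\le J}j^{-1/2}\frac{N}{f(N)^{1/2}}\ll \frac{1}{J}\cdot J^{1/2}\frac{N}{f(N)^{1/2}}=J^{-1/2}\frac{N}{f(N)^{1/2}}.
\]
Adding the three contributions gives exactly the asserted bound
\[
\sum_{N<n\le N_1}\delta(-f(n))\ll J^{-1}N+J^{1/2}f(N)^{1/2}+J^{-1/2}Nf(N)^{-1/2}.
\]

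There is no serious obstacle here; the only points requiring a little care are verifying that the hypotheses of Lemma \ref{vandercorput} are genuinely met — in particular that $N_1\ge N+1$ (given) and that the constant $\alpha$ can be taken $\asymp 1$, which follows because the two-sided bound $f''(x)\asymp f(N)/N^2$ forces $\max|f''|/\min|f''|$ to be $O(1)$ on $[N,N_1]$ — and keeping track of the weights $1-|j|/(J+1)\le 1$, which are harmless since we only need an upper bound. The sums $\sum_{j\le J}j^{1/2}\ll J^{3/2}$ and $\sum_{j\le J}j^{-1/2}\ll J^{1/2}$ are standard. The factor of $2$ from summing over both signs of $j$ is absorbed into the implied constant, since $|f''|$ is unchanged under $j\mapsto -j$.
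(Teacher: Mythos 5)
Your proposal is correct and follows essentially the same route as the paper: expand $\delta$ via its Fourier expansion, isolate the $j=0$ term to get $J^{-1}N$, apply van der Corput (Lemma \ref{vandercorput}) with $\lambda\asymp jf(N)/N^2$ to the nonzero frequencies, and sum over $j$. Your write-up is in fact slightly more careful than the paper's (explicitly verifying $\alpha\asymp 1$ and tracking the weights), but there is no substantive difference.
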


\begin{proof} We prove this along the lines of Lemma \ref{deltaest} on page 48 in \cite{GK}.  Clearly, we have
\begin{equation}
\sum\limits_{N<n\le N_1} \delta\left(-f(n)\right)\ll \frac{1}{J} \sum\limits_{|j|\le J} \left| \sum\limits_{N<n\le N_1}
e(jf(n))\right| \ll \frac{N}{J}+ \frac{1}{J} \cdot \sum\limits_{1\le j\le J} \left| \sum\limits_{N<n\le N_1}
e(jf(n))\right|.
\end{equation}
Using Lemma \ref{vandercorput}, we get, for $j\ge 1$, that
$$
\sum\limits_{N<n\le N_1} e(jf(n)) \ll j^{1/2}f(N)^{1/2}+j^{-1/2}Nf(N)^{-1/2}.
$$  
Putting everything together, it follows that 
$$
\sum\limits_{N<n\le N_1} \delta\left(-f(n)\right)\ll J^{-1}N+J^{1/2}f(N)^{1/2}+J^{-1/2}Nf(N)^{-1/2}.
$$
Thus we have completed the proof of the lemma.
\end{proof}

We shall also need the following ``Weyl differencing" lemma.

\begin{lemma} \label{weyldifflem}
For any complex numbers $z_n$, we have
\[ \left| \sum_{a <n < b} z_n \right|^2 \leq \left( 1 + \frac{b-a}{Q} \right) \sum_{|q|<Q} \left( 1- \frac{|q|}{Q} \right) \sum_{a < n, n+q<b} z_{n+q} \overline{z_{n}}, \]
where $Q$ is any positive integer.
\end{lemma}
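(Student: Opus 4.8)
The plan is to run the classical Weyl--van der Corput differencing argument. First I would adopt the convention $z_n = 0$ for every integer $n$ with $n \le a$ or $n \ge b$, so that all the sums below may be regarded as running over $\intz$ without changing their values. The starting identity is that shifting the summation index does not alter a sum: for each integer $r$ one has $\sum_{n} z_n = \sum_n z_{n+r}$, and averaging this over $r \in \{0,1,\dots,Q-1\}$ and interchanging the order of summation gives
\[ Q \sum_{a<n<b} z_n \;=\; \sum_{n\in\intz}\,\sum_{0\le r<Q} z_{n+r}. \]
The inner sum over $r$ can be nonzero only when $n+r\in(a,b)$ for some admissible $r$; a short count shows this confines $n$ to at most $b-a+Q$ integers.

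Next I would apply the triangle inequality and then the Cauchy--Schwarz inequality, using this bound on the number of surviving values of $n$, to obtain
\[ Q^2\left|\sum_{a<n<b} z_n\right|^2 \;\le\; (b-a+Q)\sum_{n\in\intz}\left|\sum_{0\le r<Q} z_{n+r}\right|^2. \]
Expanding the square produces $\sum_{n}\sum_{0\le r,s<Q} z_{n+r}\overline{z_{n+s}}$; collecting terms according to the value of $q=r-s$, one notes that for each $q$ with $|q|<Q$ there are exactly $Q-|q|$ pairs $(r,s)$ with $0\le r,s<Q$ and $r-s=q$, and the substitution $n\mapsto n-s$ in each such term leaves $\sum_{m\in\intz} z_{m+q}\overline{z_m}$, which by the support convention is precisely $\sum_{a<n,\,n+q<b} z_{n+q}\overline{z_n}$. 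Hence
\[ Q^2\left|\sum_{a<n<b} z_n\right|^2 \;\le\; (b-a+Q)\sum_{|q|<Q}(Q-|q|)\sum_{a<n,\,n+q<b} z_{n+q}\overline{z_n}, \]
and dividing through by $Q^2$ yields the stated inequality.

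I do not anticipate any real obstacle; the argument is entirely elementary. The only points needing a little care are the bookkeeping of the range of $n$ that effectively contributes after the shifts (to justify the factor $b-a+Q$, where one must handle the case of non-integer $a,b$), and the combinatorial count that each difference $q$ with $|q|<Q$ arises from exactly $Q-|q|$ pairs $(r,s)$, which is what manufactures the weight $1-|q|/Q$.
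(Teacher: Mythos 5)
Your argument is correct and complete: the support convention, the averaging over shifts $0\le r<Q$, the count of at most $b-a+Q$ contributing values of $n$ before Cauchy--Schwarz, and the combinatorial count of $Q-|q|$ pairs $(r,s)$ with $r-s=q$ all check out and deliver exactly the stated inequality. The paper does not prove the lemma itself but cites Lemma 8.17 of Iwaniec--Kowalski, whose proof is precisely this standard Weyl--van der Corput differencing argument, so your proposal coincides with the intended one.
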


\begin{proof}
This is Lemma 8.17 in \cite{HIEK}.
\end{proof}

\section{Exponential sums with Hecke eigenvalues}
In this section, we consider exponential sums of the form
\begin{equation} \label{Sdef}
S=\sum\limits_{N<n\le N'} \lambda(n) e(f(n)),
\end{equation}
where $3\le N<N'\le 2N$ and $f\in C^{\infty}([N/2,3N])$ satisfies
\begin{equation} \label{fconds}
\left|f^{(k)}(x)\right|\ll_k \frac{T}{N^{k}} \quad \mbox{for all } x\in [N/2,3N] \mbox{ and } k\in \mathbb{N}_0  
\end{equation}
with some 
\begin{equation} \label{Tcond}
T\ge N^{3/4}.
\end{equation}

We shall prove the following lemma.

\begin{lemma} \label{Slemma}
With $S$ defined in \eqref{Sdef} and the conditions \eqref{fconds} and \eqref{Tcond} satisfied, we have
\begin{equation} \label{Sesti}
S \ll N^{2/3+\varepsilon}T^{5/18}+N^{5/6}T^{-5/18},
\end{equation}
where the implied constant depends on $\varepsilon$ only.
\end{lemma}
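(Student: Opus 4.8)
The plan is to square $S$ and apply a Weyl shift, turning $|S|^2$ into a sum of shifted convolutions of Hecke eigenvalues carrying a slowly oscillating weight, and then to invoke Lemma~\ref{shiftedconvolution}. First I would dispose of two easy ranges: by \eqref{Ramanujanpetersson} one always has the trivial bound $S\ll N^{1+\varepsilon}$, which is $\ll N^{2/3+\varepsilon}T^{5/18}$ once $T\gg N^{6/5}$, and similarly $S\ll (N'-N)^{1+\varepsilon}$ settles the case in which $N'-N$ is small; so I may assume $N^{3/4}\le T<N^{6/5}$ and $N'-N\gg N^{2/3}T^{-5/9}$.

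Next I would apply Lemma~\ref{weyldifflem} to $z_n=\lambda(n)e(f(n))$ (extended by zero outside $(N,N']$) with a positive integer parameter $Q\le N'-N$. Using that $\lambda(n)$ is real, this gives
\[
|S|^2\ll \frac{N}{Q}\left(\sum_{n\sim N}\lambda(n)^2+\sum_{1\le |q|<Q}|D_q|\right),\qquad D_q:=\sum_{\substack{N<m\le N'\\ N<m+q\le N'}}\lambda(m+q)\,\lambda(m)\,e\!\left(f(m+q)-f(m)\right).
\]
The diagonal term is $\ll N^{1+\varepsilon}$ by Rankin--Selberg (or just by \eqref{Ramanujanpetersson}), and, with the eventual choice of $Q$, it is this term that contributes the main term $N^{2/3+\varepsilon}T^{5/18}$ to \eqref{Sesti}. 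For $q\neq0$ I would estimate $D_q$ by Lemma~\ref{shiftedconvolution} with $a=b=1$ and $X=Y\asymp N$. The point that makes this work is that the phase $f(m+q)-f(m)$ oscillates far more slowly than $f$: by \eqref{fconds}, $\frac{\dif^\ell}{\dif x^\ell}\!\left(f(x+q)-f(x)\right)\ll |q|T/N^{\ell+1}$ for all $\ell\ge1$. Hence, after writing the weight of $D_q$ as a smooth function $g(x,y)$ with \emph{all} of the oscillation placed in a single variable (say $g(x,y)=e(f(y+q)-f(y))\,u(x)v(y)$ with $u,v$ smooth cut-offs to scale $\asymp N$), the hypothesis of Lemma~\ref{shiftedconvolution} is met with $P=P(q):=\max\{1,|q|T/N\}$, which yields $D_q\ll P(q)^{5/4}N^{3/4+\varepsilon}$.

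Summing $\sum_{1\le|q|<Q}P(q)^{5/4}\ll Q+(T/N)^{5/4}Q^{9/4}$ and inserting everything would give, schematically,
\[
|S|^2\ll \frac{N^{2+\varepsilon}}{Q}+N^{7/4+\varepsilon}+N^{1/2+\varepsilon}T^{5/4}Q^{5/4}+\mathcal{E},
\]
where $\mathcal{E}$ is the error incurred in the smoothing step; balancing the first and third terms by taking $Q\asymp N^{2/3}T^{-5/9}$ (which lies in $[1,N'-N]$ throughout the remaining range of $T$), and using $T\ge N^{3/4}$ to absorb $N^{7/4+\varepsilon}$, then leads to \eqref{Sesti} provided $\mathcal{E}\ll N^{5/3+\varepsilon}T^{-5/9}$ --- and this last bound on the smoothing error should be the source of the second term $N^{5/6}T^{-5/18}$ in \eqref{Sesti}. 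The hard part, I expect, is exactly this control of the cut-off: Lemma~\ref{shiftedconvolution} demands a smooth weight, but smoothing $D_q$ too aggressively (e.g.\ on the scale $N/T$) reinstates $P=T$ and wipes out the benefit of the Weyl shift, while smoothing only gently produces too large an $\mathcal{E}$; the right scale is $\asymp N/P(q)$, and one must check that this is affordable (or, alternatively, verify that the delta-method proof of Lemma~\ref{shiftedconvolution} tolerates sharp cut-offs with the same bound). Everything else is parameter bookkeeping.
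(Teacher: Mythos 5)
Your overall strategy --- Weyl differencing to convert $|S|^2$ into shifted convolutions $\sum\lambda(m+q)\lambda(m)e(f(m+q)-f(m))$ with a slowly oscillating phase, then Lemma~\ref{shiftedconvolution} with $P\asymp 1+|q|T/N$, then the choice $Q\asymp N^{2/3}T^{-5/9}$ --- is exactly the paper's, and your bookkeeping for the diagonal and off-diagonal main terms matches \eqref{Sest}. But the issue you flag at the end, the sharp cut-off in $D_q$, is a genuine gap, not a technicality, and your proposed remedies do not close it. Smoothing the endpoints of $D_q$ at scale $\Delta_q\asymp N/P(q)=N^2/(|q|T)$ (the largest scale compatible with keeping $P=P(q)$) costs an unconditional error of $\Delta_q N^{\varepsilon}$ per shift, hence $\mathcal{E}\ll \frac{N}{Q}\sum_{q}\Delta_q N^{\varepsilon}\asymp \frac{N^{3+\varepsilon}}{QT}$ in $|S|^2$; with $Q=N^{2/3}T^{-5/9}$ this is $N^{7/3+\varepsilon}T^{-4/9}$, which exceeds both $N^{4/3}T^{5/9}$ and $N^{5/3}T^{-5/9}$ throughout $N^{3/4}\le T<N$. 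Smoothing at a finer scale only inflates $P$ and loses the gain from differencing, as you note; and the delta-method proof of Lemma~\ref{shiftedconvolution} genuinely needs a smooth weight (repeated partial integration against Bessel kernels after Voronoi), so one cannot simply admit sharp cut-offs. Your guess that the term $N^{5/6}T^{-5/18}$ arises from this smoothing error is also off the mark.

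The paper's resolution is structural rather than analytic: instead of applying the exact inequality of Lemma~\ref{weyldifflem} to $z_n=\lambda(n)e(f(n))$ and smoothing afterwards, it writes $QS=\sum_n\sum_{0\le q<Q}z_{n+q}\lambda(n+q)e(f(n+q))$, applies Cauchy--Schwarz in $n$, discards the $O(Q)$ boundary values of $n$ where the inner $q$-sum is incomplete at a cost of $Q^3N^{1+\varepsilon}$, and only \emph{then} majorizes the remaining sum of non-negative quantities $\bigl|\sum_{0\le q<Q}\lambda(n+q)e(f(n+q))\bigr|^2$ by inserting a fixed smooth bump $\Phi(n)^2$ supported at scale $N$. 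Because the insertion happens before the square is expanded and the summand is non-negative, the overcounting is free, and every weight reaching Lemma~\ref{shiftedconvolution} is smooth at scale $N$ with $P=TQ/N$. The boundary term $Q^3N^{1+\varepsilon}$, i.e.\ $N^{1/2}Q^{1/2}$ after taking square roots, is the true source of the second term $N^{5/6}T^{-5/18}$ in \eqref{Sesti}. To repair your write-up you should reorganize the differencing in this order; as it stands, the step from $D_q$ with sharp limits to Lemma~\ref{shiftedconvolution} does not go through in the stated range of $T$.
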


\begin{proof}
We first do a ``Weyl differencing", where we introduce an extra smooth weight function 
$\Phi\in C^{\infty}(\mathbb{R})$, compactly supported in $[N/2,5N/2]$ and satisfying
$$
\Phi^{(k)}(x)\ll_k N^{-k} \quad \mbox{for all } x\in \mathbb{R}^+ \mbox{ and } k\in \mathbb{N}_0
$$
and
$$
\Phi(x)=1 \quad \mbox{for } N\le x\le N'.
$$
Let $Q$ be any positive integer and set
$$
z_n:=\begin{cases}
      1 & \mbox{ if } N<n\le N' , \\ 0 & \mbox{ otherwise.}
     \end{cases}
$$
Then we have
$$
S=\sum\limits_{n} z_n\lambda(n) e(f(n))=\sum\limits_{n} z_{n+q}\lambda(n+q) e(f(n+q))
$$
for any $q\in \mathbb{Z}$. We sum this up over $q$ with $0\le q<Q\le N/2$, getting
$$
QS=\sum\limits_{N-Q<n<N'}\ \sum\limits_{0\le q<Q} z_{n+q}\lambda(n+q) e(f(n+q)).
$$
Hence, by Cauchy's inequality,
$$
Q^2|S|^2\le (N'-N+Q)\sum\limits_n \left| \sum\limits_{0\le q<Q} z_{n+q}\lambda(n+q) e(f(n+q)) \right|^2.
$$
It follows that 
$$
Q^2|S|^2\ll N\sum\limits_{N<n\le N'-Q} \left| \sum\limits_{0\le q<Q} \lambda(n+q) e(f(n+q)) \right|^2 + Q^3N^{1+\varepsilon}
$$
and further
$$
Q^2|S|^2\ll N\sum\limits_{n} \Phi(n)^2 
\left| \sum\limits_{0\le q<Q} \lambda(n+q) e(f(n+q)) \right|^2 + Q^3N^{1+\varepsilon}.
$$

Expanding the square on the right-hand side and setting
$$
G_{q_1,q_2}(m_1,m_2):=\Phi(m_1-q_1)\Phi(m_2-q_2) \quad \mbox{and} \quad F_{q_1,q_2}(m):=f(m)-f(m+q_1-q_2)
$$
gives
\begin{equation} \label{afterweyl}
\begin{split}
Q^2|S|^2\ll  N\sum\limits_{0\le q_1<Q}\ & \sum\limits_{0\le q_2<Q} 
\sum\limits_{\substack{m_1,m_2\\ m_1-m_2=q_1-q_2}} G_{q_1,q_2}(m_1,m_2)
\lambda(m_1) \lambda(m_2) e\left(F_{q_1,q_2}(m_1)\right) + \\
& +Q^3N^{1+\varepsilon}.
\end{split}
\end{equation}
Now we impose the condition that
$$
Q\ge \frac{N}{T}.
$$
Then a simple computation shows that
$$
\frac{\dif^{i+j}}{\dif x^i \dif y^j} G_{q_1,q_2}(x,y)
e\left(F_{q_1,q_2}(x)\right)\ll_{i,j} \left(\frac{TQ}{N^2}\right)^i N^{-j}
$$
for
$$ 
0\le q_1,q_2<Q, \quad
\frac{N}{2}+q_1\le x\le \frac{5N}{2}+q_1, \quad \frac{N}{2}+q_2\le y\le \frac{5N}{2}+q_2.
$$

Now if $q_1\not=q_2$, we use Lemma \ref{shiftedconvolution} with
$$
g(x,y):=G_{q_1,q_2}(x,y) e\left(F_{q_1,q_2}(x)\right)
$$
and 
$$
a=b=1, \quad X:=N,\quad Y:=N,\quad P:=\frac{TQ}{N}
$$
to deduce that the inner double sum on the right-hand side of \eqref{afterweyl} is
\begin{equation}
\sum\limits_{\substack{m_1,m_2\\ m_1-m_2=q_1-q_2}} G_{q_1,q_2}(m_1,m_2)
\lambda(m_1) \lambda(m_2) e\left(F_{q_1,q_2}(m_1)\right)\ll (TQ)^{5/4}N^{-1/2+\varepsilon}.
\end{equation}
If $q_1=q_2$, then we have the trivial bound
\begin{equation}
\sum\limits_{\substack{m_1,m_2\\ m_1-m_2=q_1-q_2}} G_{q_1,q_2}(m_1,m_2)
\lambda(m_1) \lambda(m_2) e\left(F_{q_1,q_2}(m_1)\right)=\sum\limits_{m} \Phi(m-q_1)^2
\lambda(m_1)^2 \ll N^{1+\varepsilon}.
\end{equation}

Combining everything in this section, we obtain
\begin{equation} \label{Sest}
S\ll (TQ)^{5/8}N^{1/4+\varepsilon}+\frac{N}{Q^{1/2}}+N^{1/2}Q^{1/2}
\end{equation}
under the condition
\begin{equation} \label{Qcond}
\frac{N}{T}\le Q\le \frac{N}{2}.
\end{equation}
Now we choose
$$
Q:=\frac{N^{2/3}}{T^{5/9}}.
$$
Then, by $N\ge 3$ and \eqref{Tcond}, the condition in \eqref{Qcond} is satisfied, and we get \eqref{Sesti}.
\end{proof}

\section{Reduction to exponential sums}

Using $\lambda(n)\ll n^{\varepsilon}$, partial summation, and the fact that every cusp form can be written as a linear combination of finitely many Hecke eigenforms, Theorem \ref{mainresult}, our main result, can be easily deduced from the following result whose proof will be the object of the remainder of this paper. \newline

\begin{theorem} \label{mainresultmodified}
Let $g : [1,\infty)\rightarrow [1,\infty)$ be a function satisfying the conditions (i) -- (iii) in Section 1.  Suppose that the inverse function of $g$, $f$, satisfies the condtions (a) -- (e) in Section 1.  Let $\lambda(n)$ be the normalized $n$-th Fourier coefficient of a Hecke eigenform for the full modular group.  By $\Lambda(n)$, we denote the von Mangoldt function. Then there exists a positive constant $C$ depending on the cusp form such that
\begin{equation} \label{goal}
\sum\limits_{n\le N} \Lambda\left(\left[g(n)\right]\right)\lambda\left(\left[g(n)\right]\right) \ll
N\exp(-C\sqrt{\log N}),
\end{equation}
where the implied $\ll$-constant depends only on $C$ and the cusp form.
\end{theorem}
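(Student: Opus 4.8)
The plan is to run the classical Piatetski--Shapiro reduction to exponential sums and then to estimate those by a Heath--Brown/Vaughan decomposition combined with Lemma~\ref{Slemma} (for the type I pieces) and van der Corput's inequality (for the type II pieces). First I would detect the value of $[g(n)]$: since $g$ is increasing with inverse $f$, one has $[g(n)]=m$ exactly when $f(m)\le n<f(m+1)$, so with $[x]=x-\psi(x)-\tfrac12$,
\[
\sum_{n\le N}\Lambda([g(n)])\lambda([g(n)])=\sum_{m\le g(N)}\Lambda(m)\lambda(m)\bigl(f(m+1)-f(m)\bigr)-\sum_{m\le g(N)}\Lambda(m)\lambda(m)\bigl(\psi(f(m+1))-\psi(f(m))\bigr)+O(N^{\varepsilon}),
\]
the $O$-term coming from the single incomplete block at the top of the range. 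In the first sum $f(m+1)-f(m)=f'(m)+O(f''(m))\asymp f(m)/m\le 1$ by conditions (c), (d); so partial summation against Lemma~\ref{cuspatprimes}, followed by the substitution $m=g(s)$ (using $g'(s)\asymp g(s)/s$, which turns the arising integral into one over $s\le N$), shows it is $\ll N\exp(-C'\sqrt{\log N})$.

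It then remains to handle the two $\psi$-sums. For $\psi(f(m))$ I would decompose dyadically; it suffices to bound $\sum_{m\sim M}\Lambda(m)\lambda(m)\psi(f(m))$ with a power saving for each $M\le g(N)$ and then sum over the $\ll\log N$ blocks, invoking \eqref{fcond} to convert $g(N)^{1-\eta}$ into an admissible power of $N$; the sum with $\psi(f(m+1))$ is identical with $f(\cdot+1)$ (which again satisfies (d), (e)) in place of $f$. Applying Lemma~\ref{Valer} with a parameter $J$ replaces $\psi$ by the trigonometric polynomial $\psi^{*}$ at the cost of $\ll M^{\varepsilon}\sum_{m\sim M}\delta(f(m))$, which by Lemma~\ref{deltaest} (its hypotheses on $f$ being part of condition (d)) and an optimal choice of $J$ is $\ll M^{2/3+\varepsilon}$, comfortably admissible. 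Thus I am reduced to proving \eqref{goal0} with $r(m)=\lambda(m)\psi^{*}(f(m))$.

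Next I would feed this $r$ into Lemma~\ref{Heath}, choosing $u,v,z$ to meet its constraints, so that matters reduce to the type I sums $K$ ($Y\ge z$) and the type II sums $L$ ($u\le Y\le v$). In both I expand $\psi^{*}(f(mn))=-\sum_{1\le|j|\le J}(2\pi i j)^{-1}W\bigl(\tfrac{j}{J+1}\bigr)e(jf(mn))$ and split $\lambda(mn)=\sum_{d\mid(m,n)}\mu(d)\lambda(m/d)\lambda(n/d)$ by Lemma~\ref{multhecke}, writing $m=dm'$, $n=dn'$. For $K$: for fixed $d,j,m'$ the inner sum $\sum_{n'\sim Y/d}\lambda(n')e\bigl(jf(d^{2}m'n')\bigr)$ has phase obeying \eqref{fconds} with $T=jf(M)$, and \eqref{Tcond} holds since $T\ge f(M)\ge M^{29/30+\varepsilon}\ge M^{3/4}$; Lemma~\ref{Slemma} then applies, and summing trivially over $m'$ (via $\lambda\ll M^{\varepsilon}$), over $d$ (a convergent series), and over $j$ (losing $\log J$) gives $K\ll M^{1-2\eta}$ once $z$ is taken large enough. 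For $L$: after the same manipulations, a Cauchy--Schwarz (or Lemma~\ref{weyldifflem}) in $m'$ removes the coefficients and opens the square, leaving $\sum_{m'\sim X/d}e\bigl(j[f(d^{2}n_1'm')-f(d^{2}n_2'm')]\bigr)$ for $n_1'\ne n_2'$; a Taylor expansion shows its second derivative in $m'$ is $\asymp jd^{3}|n_1'-n_2'|f(M)/(YX^{2})\ne 0$, which is precisely where condition (e), $2f''(x)+xf'''(x)\asymp f(x)/x^{2}$, enters, as it governs $\tfrac{\mathrm d}{\mathrm dt}\bigl(t^{2}f''(tw)\bigr)$ and keeps that derivative away from zero. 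Lemma~\ref{vandercorput} then yields a genuine saving, and summing over $n_1',n_2',d,j$ gives $L\ll M^{1-2\eta}$ for $u,v$ in a suitable window. Lemma~\ref{Perron} (or the smooth weight built into Lemma~\ref{Slemma}) is used along the way to detach the constraint $mn\sim M$ from the ranges $m\sim X$, $n\sim Y$.

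The main obstacle will be the numerology of this last step: one must verify that the lower bound on $z$ forced by the type I estimate and the window $u\le Y\le v$ forced by the type II estimate are compatible with the constraints $3\le u<v<z<2M$, $z\ge 4u^{2}$, $M\ge 32z^{2}u$, $v^{3}\ge 64M$ of Lemma~\ref{Heath}, \emph{uniformly} over all admissible $f$ --- the tightest case being $f(x)\asymp x^{29/30}$, where $jf$ oscillates only mildly and the savings in Lemmas~\ref{Slemma} and~\ref{vandercorput} are thin. Carrying the exponents through, this is exactly the point at which the hypothesis $g(x)\le x^{30/29-\varepsilon}$ (equivalently $f(x)\ge x^{29/30+\varepsilon}$) is needed; granted it, $K,L\ll M^{1-2\eta}$ for some fixed $\eta>0$, Lemma~\ref{Heath} delivers \eqref{goal0}, and assembling the pieces proves \eqref{goal}.
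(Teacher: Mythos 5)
Your overall architecture matches the paper's: detect $[g(n)]=m$ via the inverse function, peel off the main term with partial summation against Lemma~\ref{cuspatprimes}, replace $\psi$ by $\psi^*$ via Lemmas~\ref{Valer} and~\ref{deltaest}, feed the result into Lemma~\ref{Heath}, split $\lambda(mn)$ with Lemma~\ref{multhecke}, treat the type~II sums by Cauchy/Weyl differencing plus Lemma~\ref{vandercorput} (your second-derivative computation, including the role of condition (e), is exactly the paper's), and treat the type~I sums with Lemma~\ref{Slemma}. However, there is a genuine gap in your type~I treatment, and it is precisely where the whole argument either closes or does not.

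You propose to handle $K$ for \emph{all} $Y\ge z$ by applying Lemma~\ref{Slemma} to the inner $n'$-sum and summing trivially over $m'$, $h$, $d$, saying this works ``once $z$ is taken large enough.'' Carrying out that trivial summation (as the paper does in Lemma~\ref{Kdcondlemma1}) gives $K_d\ll N^{1-3\eta}d^{-1}$ only when $Y\ge N^{23/6+100\eta}f(N)^{-3}$, which is at least $N^{5/6}$ even in the most favourable case $f(N)\asymp N$, and about $N^{14/15}$ when $f(N)\asymp N^{29/30}$. But Lemma~\ref{Heath} forces $N\ge 32z^2u$ with $u\ge 3$ (and in fact $u\ge N^{2+100\eta}f(N)^{-2}$ is required by your own type~II window), so $z\ll N^{1/2}$: you cannot take $z$ large enough, and the range $z\le Y\ll N^{23/6}f(N)^{-3}$ is left uncovered for every admissible $f$. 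The missing idea is the paper's Lemma~\ref{Kdcondlemma0}: in that intermediate range one must \emph{discard} the Hecke structure of the $n$-sum, treat $\lambda(n)$ as an arbitrary bounded coefficient, and run the type~II (Cauchy + van der Corput) argument on $K_d$ with the roles of $X$ and $Y$ interchanged; this covers $N^{6+100\eta}f(N)^{-6}\le Y\le f(N)^2N^{-1-100\eta}$. The two type~I regimes overlap exactly when $f(N)\ge N^{29/30+O(\eta)}$ --- this overlap condition, not the compatibility of $z$ with the $u,v$ window that you flag, is the actual source of the exponent $30/29$ in hypothesis \eqref{gcond}. Without this extra case your numerology cannot be made to work for any exponent.
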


In this section, we reduce the left-hand side of \eqref{goal} to  exponential sums.  We recall that $f:=g^{-1}$ denotes the 
function inverse to $g$.
Let $m,n\in \mathbb{N}$. Then $\left[g(n)\right]=m$ is equivalent to
$$
-f(m+1)< -n\le -f(m).
$$
Therefore, we have
\begin{equation} \label{rewritten}
\sum\limits_{n\le N} \Lambda\left(\left[g(n)\right]\right)\lambda\left(\left[g(n)\right]\right) =\sum\limits_{g(1)\le m\le g(N)} 
\left(\left[-f(m)\right]-\left[-f(m+1)
\right]\right)\Lambda(m)\lambda(m)+O(\log N).
\end{equation}
Breaking into dyadic intervals and using that $g$ is monotonically increasing, it hence suffices to prove that
\begin{equation} \label{S}
S:=\sum\limits_{n\sim g(N)} \left(\left[-f(n)\right]-\left[-f(n+1)
\right]\right)\Lambda(n)\lambda(n)\ll N\exp(-C\sqrt{\log N})
\end{equation}
for any $N>1$. We write the above sum $S$ in the form
\begin{equation} \label{S12}
S=S_1+S_2,
\end{equation}
where
$$
S_1=\sum\limits_{n\sim g(N)} \left(f(n+1)-f(n)\right)\Lambda(n)\lambda(n)
$$
and
$$
S_2=\sum\limits_{n\sim g(N)} \left(\psi\left(-f(n+1)\right)-\psi\left(-f(n)\right)\right)\Lambda(n)\lambda(n),
$$
with $\psi(n)$ being the saw-tooth function in Lemma~\ref{Valer}. \newline

By \eqref{fder} and the mean value theorem, we have the bounds
$$
f(x+1)-f(x)\ll \frac{f(x)}{x} \quad \mbox{and} \quad
\frac{\dif}{\dif x} \left(f(x+1)-f(x)\right) \ll\frac{f(x)}{x^2}
$$
for all $x$ in the image of $g$. Hence,
using partial summation, $f\circ g(x)=x$ and $g(x)\ll x^{30/29-\varepsilon}$, we deduce from Lemma \ref{cuspatprimes} that
$$
S_1\ll N\exp(-C\sqrt{\log N}),
$$
where the implied constant depends only on $C$ and the cusp form. \newline

Our treatment of the sum $S_2$ begins like in \cite{GK}. By Lemma \ref{Valer}, we have the following. For any $J>0$ there exist functions $\psi^*$ and $\delta$, with $\delta$ non-negative, such that
$$
\psi(x)=\psi^*(x)+O( \delta(x) ),
$$
where
$$
\psi^*(x)=\sum\limits_{1\le |j|\le J} a(j)e(jx), \quad
\delta(x)=\sum\limits_{|j|\le J} b(j)e(jx)
$$
with
$$
a(j)\ll j^{-1}, \quad b(j)\ll J^{-1}.
$$
Consequently,
\begin{eqnarray*}
S_2&=&\sum\limits_{n\sim g(N)} \left(\psi^*\left(-f(n+1)\right)-\psi^*\left(-f(n)\right)\right)\Lambda(n)\lambda(n)+
O\left((\log N) \sum\limits_{n\sim g(N)} \left(\delta\left(-f(n+1)\right)+\delta\left(-f(n)\right)\right)  \right)\\
&=& S_3+O(S_4),
\end{eqnarray*}
say. We fix a small $\eta>0$ and set
\begin{equation} \label{J}
J:=\frac{g(N)}{N}\cdot N^{\eta}.
\end{equation}
Then, using \eqref{fder}, Lemma \ref{deltaest} and $g(N)\ll N^{30/29-\varepsilon}$, we obtain
$$
S_4\ll N^{1-\eta/2}.
$$

The remaining task is to prove that
$$
S_3\ll N^{1-\eta/2},
$$
provided that $\eta$ is sufficiently small. We write
$$
S_3=\sum\limits_{1\le |j|\le J} \sum\limits_{n\sim g(N)}  \Lambda(n)\lambda(n)a(j)\phi_j(n)e(-jf(n)),
$$
where $\phi_j(x)=1-e(j(f(x)-f(x+1)))$. Using partial summation and the bounds $a(j)\ll j^{-1}$ and
$$
\phi_j(x)\ll \frac{jf(x)}{x} \quad \mbox{and} \quad \frac{\dif}{\dif x} \phi_j(x)\ll \frac{jf(x)}{x^2},
$$
we deduce that it suffices to prove that
$$
\sum\limits_{1\le |j|\le J} \left|\sum\limits_{n\sim g(N)}  \Lambda(n)\lambda(n)e(-jf(n))\right| \ll g(N)N^{-\eta/2}.
$$
Replacing $g(N)$ by $N$ and $N$ by $f(N)$, taking the definition of $J$ in \eqref{J} into account, dividing the summation interval 
$1\le |j|\le J$ into $O(\log 2J)$ dyadic intervals, and using the facts that $e(-x)=\overline{e(x)}$ and the Hecke eigenvalues are real, 
we see that the above bound holds if
\begin{equation} \label{goal1}
\sum\limits_{h\sim H} \left| \sum\limits_{n\sim N} \Lambda(n)\lambda(n)e\left(hf(n)\right)\right| \ll N^{1-\eta}
\end{equation}
for any $N\ge 1$ and $1\le H\le N^{1+\eta}f(N)^{-1}$. The following lemma reduces the term on the left-hand side of \eqref{goal1} to trilinear exponential sums.

\begin{lemma} \label{bilinearsums}
Suppose that $u$, $v$ and $z$ are real parameters satisfying the conditions
\begin{equation} \label{uvzcond}
3\le u<v<z<2N,\ z-1/2\in \mathbb{N},\ z\ge 4u^2,\ N\ge 32z^2u,\ v^3\ge 64N. \end{equation}
Suppose further that $1\le Y\le N$, $XY=N$ and $H\ge 1$. Assume that $A_m$, $B_n$ and $C_h$ are complex numbers.  For $d\in \mathbb{N}$ set
\begin{equation} \label{Kddef}
K_d:=\mathop{\sum\limits_{m\sim X/d}\ \sum_{ n\sim Y/d}}_{mn\sim N/d^2}\ \sum\limits_{h\sim H} \
A_m C_h \lambda(n)e\left(hf\left(d^2mn\right)\right)
\end{equation}
and
\begin{equation} \label{Lddef}
L_d:=\mathop{\sum\limits_{m\sim X/d}\ \sum_{n\sim Y/d}}_{mn\sim N/d^2}\ \sum\limits_{h\sim H}\  A_{m} B_n C_h 
e\left(hf\left(d^2mn\right)\right).
\end{equation}
Then the estimate \eqref{goal1} holds if we uniformly have
\begin{equation} \label{Kdbound}
K_d\ll N^{1-3\eta}d^{-1}\ \  \mbox{ for } Y\ge z,\ d\le 2Y \; \mbox{and any complex} \; A_m,C_h\ll 1
\end{equation}
and
\begin{equation} \label{Ldbound}
L_d\ll N^{1-3\eta}d^{-1}\ \  \mbox{ for } u\le Y\le v,\ d\le 2Y \; \mbox{and any complex} \; A_m,B_n,C_h\ll 1.
\end{equation}
\end{lemma}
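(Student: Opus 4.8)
The plan is to remove the absolute values over $h$, apply the Heath--Brown decomposition of the von Mangoldt function from Lemma~\ref{Heath}, and then use the multiplicativity of the Hecke eigenvalues from Lemma~\ref{multhecke} to bring $\lambda(mn)$ into exactly the form appearing in the sums $K_d$ and $L_d$. First I would, for each $h\sim H$, pick a complex number $C_h$ with $|C_h|=1$ for which $C_h\sum_{n\sim N}\Lambda(n)\lambda(n)e(hf(n))=\bigl|\sum_{n\sim N}\Lambda(n)\lambda(n)e(hf(n))\bigr|$. Putting $r(n):=\lambda(n)\sum_{h\sim H}C_h\,e(hf(n))$, the left-hand side of \eqref{goal1} equals $\sum_{n\sim N}\Lambda(n)r(n)$, and it therefore suffices to show $\sum_{n\sim N}\Lambda(n)r(n)\ll N^{1-\eta}$.

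Next I would apply Lemma~\ref{Heath} to this complex-valued $r$, with the parameters $u,v,z$ chosen to satisfy \eqref{uvzcond} (these are literally the hypotheses of Lemma~\ref{Heath}). That reduces the task to proving, for arbitrary $a_m,b_n\ll 1$, the bounds $K\ll N^{1-2\eta}$ when $Y\ge z$ and $L\ll N^{1-2\eta}$ when $u\le Y\le v$, where from \eqref{Kdef} and \eqref{Ldef}
\[
K=\mathop{\sum_{m\sim X}\sum_{n\sim Y}}_{mn\sim N}\sum_{h\sim H}a_mC_h\lambda(mn)e\bigl(hf(mn)\bigr),\qquad
L=\mathop{\sum_{m\sim X}\sum_{n\sim Y}}_{mn\sim N}\sum_{h\sim H}a_mb_nC_h\lambda(mn)e\bigl(hf(mn)\bigr).
\]

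To finish, I would substitute $\lambda(mn)=\sum_{d\mid\gcd(m,n)}\mu(d)\lambda(m/d)\lambda(n/d)$ from Lemma~\ref{multhecke}, write $m=dm'$, $n=dn'$ and interchange summations. Since $d\mid n$ with $n\sim Y$ one has $d\le 2Y$, and $m'\sim X/d$, $n'\sim Y/d$, $m'n'\sim N/d^2$, $mn=d^2m'n'$; hence $K=\sum_{d\le 2Y}\mu(d)K_d$ and $L=\sum_{d\le 2Y}\mu(d)L_d$, where $K_d$ and $L_d$ are precisely the sums \eqref{Kddef} and \eqref{Lddef} with $A_{m'}:=a_{dm'}\lambda(m')$ throughout and $B_{n'}:=b_{dn'}\lambda(n')$ in $L_d$ (the surviving $\lambda(n')$ in $K_d$ playing the role of $\lambda(n)$ there). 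By the Deligne bound \eqref{Ramanujanpetersson} we have $\lambda(m'),\lambda(n')\ll N^{\varepsilon}$, so these coefficients are $\ll N^{\varepsilon}$; after rescaling them by $N^{-\varepsilon}$, the hypotheses \eqref{Kdbound} and \eqref{Ldbound} (whose range restriction $d\le 2Y$ we have just checked) give $K_d\ll N^{1-3\eta+\varepsilon}d^{-1}$ and $L_d\ll N^{1-3\eta+2\varepsilon}d^{-1}$. Summing over $d\le 2Y$ and using $\sum_{d\le 2Y}d^{-1}\ll\log N$ produces $K,L\ll N^{1-3\eta+2\varepsilon}\log N$, which is $\ll N^{1-2\eta}$ as soon as $\varepsilon$ is taken small in terms of $\eta$ and $N$ is large enough; for bounded $N$ the claim \eqref{goal1} is trivial via \eqref{Ramanujanpetersson}. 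This establishes the lemma.

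Since this is only a structural reduction, no real obstacle arises here. The single delicate point is bookkeeping: the factor $N^{\varepsilon}$ coming from the Ramanujan--Petersson bound, incurred when $\lambda(m')$ and $\lambda(n')$ are folded into $A_{m'}$ and $B_{n'}$, must be absorbed, which is exactly why \eqref{Kdbound} and \eqref{Ldbound} are imposed with the stronger saving $N^{-3\eta}$ rather than the $N^{-2\eta}$ that Lemma~\ref{Heath} would require. The substantive work --- verifying \eqref{Kdbound} and \eqref{Ldbound} by estimating $K_d$ via van der Corput's method and Lemma~\ref{Slemma}, and $L_d$ via the Weyl-differencing Lemma~\ref{weyldifflem} and Perron's formula (Lemma~\ref{Perron}) --- is carried out in the remaining sections.
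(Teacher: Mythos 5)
Your proposal is correct and follows essentially the same route as the paper: choosing unimodular coefficients to remove the absolute values, setting $r(n)=\lambda(n)\sum_{h\sim H}c_h e(hf(n))$, invoking Lemma~\ref{Heath}, and then unfolding $\lambda(mn)$ via Lemma~\ref{multhecke} to land on $K_d$ and $L_d$, with the extra $N^{-\eta}$ saving absorbing the $N^{\varepsilon}$ from Deligne's bound and the sum over $d$.
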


\begin{proof} We first write
$$
\sum\limits_{h\sim H} \left| \sum\limits_{n\sim N} \Lambda(n)\lambda(n)e\left(hf(n)\right)\right| =
\sum\limits_{h\sim H} c_h \sum\limits_{n\sim N} \Lambda(n)\lambda(n)e\left(hf(n)\right),
$$
where $c_h$ are suitable complex numbers with $|c_h|=1$. We further set
$$
r(n):=\lambda(n)\sum\limits_{h\sim H} c_he\left(hf(n)\right)
$$
so that
$$
\sum\limits_{h\sim H} \left| \sum\limits_{n\sim N}
\Lambda(n)\lambda(n)e\left(hf(n)\right)\right|=\sum\limits_{n\sim N}
\Lambda(n)r(n).
$$
Now, by Lemma \ref{Heath}, the bound \eqref{goal1} holds if
\begin{equation} \label{KL}
K\ll N^{1-2\eta}\quad \mbox{and} \quad L\ll N^{1-2\eta}
\end{equation}
under the conditions of the same lemma. Here $K$ and $L$ are defined as in \eqref{Kdef} and \eqref{Ldef}. We may rewrite these terms in the form
$$
K=\mathop{\sum\limits_{m\sim X} \sum_{n\sim Y}}_{mn\sim N} \sum\limits_{h\sim H} a_mc_h\lambda(mn)e\left(hf(mn)\right)
$$
and
$$
L=\mathop{\sum\limits_{m\sim X} \sum_{n\sim Y}}_{mn\sim N} \sum\limits_{h\sim H} a_m b_nc_h\lambda(mn)e\left(hf(mn)\right).
$$
Using the multiplicative property of Hecke eigenvalues, Lemma \ref{multhecke}, we have
\begin{equation} \label{rewriteK}
K=\sum\limits_{d\le 2Y} \mu(d) \mathop{\sum\limits_{m\sim X/d}\ \sum_{n\sim Y/d}}_{mn\sim N/d^2}\ \sum\limits_{h\sim H} \
a_{dm} \lambda(m) c_h \lambda(n) e\left(hf(d^2mn)\right)
\end{equation}
and
\begin{equation} \label{rewriteL}
L=\sum\limits_{d\le 2Y} \mu(d) \mathop{\sum\limits_{m\sim X/d}\ \sum_{ n\sim Y/d}}_{mn\sim N/d^2}\ \sum\limits_{h\sim H} \
a_{dm} \lambda(m) b_{dn} \lambda(n) c_h e\left(hf(d^2mn)\right).
\end{equation}
Now, \eqref{KL} follows from \eqref{Kdbound}, \eqref{Ldbound},\eqref{rewriteK}, \eqref{rewriteL} and the bound
$\lambda(n)\ll n^{\varepsilon}$.
 \end{proof}

In the following sections, we shall estimate the terms $K_d$ and $L_d$.

\section{Estimation of $L_d$}

Our task in this section is to estimate $L_d$, defined in \eqref{Lddef}.

\begin{lemma} \label{Ldcondlemma}
For every sufficiently small and fixed $\eta >0$, we have
\begin{equation} \label{Lddesiredest}
 L_d \ll N^{1-3 \eta} d^{-1}
 \end{equation}
provided that $f(N) \geq N^{8/9+30 \eta}$, $1 \leq H \leq N^{1+\eta}f(N)^{-1}$, $1 \leq d \leq 2Y$ and
\begin{equation} \label{Ycond1}
\frac{N^{2+100 \eta}}{f(N)^2} \le Y \leq \frac{f(N)^6}{N^{5+100 \eta}}.
\end{equation}
\end{lemma}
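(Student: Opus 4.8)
The plan is to treat $L_{d}$ as a ``type II'' trilinear exponential sum and to reduce it, by Cauchy's inequality (possibly combined with the Weyl-differencing inequality of Lemma~\ref{weyldifflem}), to coefficient-free exponential sums that can be estimated with van der Corput's second-derivative test, Lemma~\ref{vandercorput}. The first step is to disentangle the hyperbolic condition $mn\sim N/d^{2}$ by means of Lemma~\ref{Perron}: applied in the variable $m$, for each fixed pair $n,h$, it replaces the $n$-dependent range of $m$ by the fixed interval $m\sim X/d$, at the cost of an integral over $t$ with $|t|\ll N$, an error $O(\log N)$, and a unimodular weight $m^{-it}$ which is absorbed into $A_{m}$. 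It therefore suffices to bound, uniformly in $t$, the same sum with $m$ and $n$ now running over fixed dyadic intervals, an $N^{\varepsilon}$-loss being affordable.

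Next I would strip the arbitrary coefficients $A_{m},B_{n},C_{h}$ by applying Cauchy--Schwarz (once, or iterated according to the sub-range of $Y$ in play) to the summation variables, thereby reducing the estimation of $|L_{d}|$ to that of exponential sums of the shape $\sum_{m}e\bigl(h_{1}f(d^{2}mn_{1})-h_{2}f(d^{2}mn_{2})\bigr)$ (or an analogous one with the roles of the variables interchanged), now accompanied by auxiliary summations over the differenced variables $n_{1},n_{2}$, and $h_{1},h_{2}$ if $h$ too was differenced. The \emph{diagonal} contribution, in which the differenced variables coincide, admits no cancellation, but after the Cauchy--Schwarz loss it is $\ll N^{2-6\eta}d^{-2}$ precisely because $Y\ge N^{2+100\eta}f(N)^{-2}$; this is where the lower bound in \eqref{Ycond1} is used.

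For the off-diagonal terms I would apply Lemma~\ref{vandercorput} to the $m$-sum, and here conditions (d) and (e) are exactly what is needed. Writing $g(x):=x^{2}f''(x)$, condition (e) gives $g'(x)=x\bigl(2f''(x)+xf'''(x)\bigr)\asymp f(x)/x$ with a fixed sign, so $g$ is strictly monotone, while (d) gives $g(x)\asymp f(x)$ and $g^{(k)}(x)\asymp f(x)/x^{k}$. Consequently the second derivative of the above phase, $m^{-2}\bigl(h_{1}g(d^{2}mn_{1})-h_{2}g(d^{2}mn_{2})\bigr)$, is, by the mean value theorem, of a definite size whenever $h_{1}=h_{2}$ or $n_{1}=n_{2}$, and in general it is of definite size outside a short sub-interval of $m$ on which $h_{1}g(d^{2}mn_{1})\approx h_{2}g(d^{2}mn_{2})$. \textbf{The main obstacle} is to control those ``resonant'' quadruples $(h_{1},n_{1},h_{2},n_{2})$ for which the phase has anomalously small second derivative over the entire range: one must bound their number by a divisor-type (lattice-point) count and then balance the trivial bound on the resonant part against the van der Corput bound on the rest. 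Finally, summing these estimates over the auxiliary variables, absorbing the $\log$- and $N^{\varepsilon}$-losses, and optimizing the length of the Weyl shift yields $L_{d}\ll N^{1-3\eta}d^{-1}$; the upper bound $Y\le f(N)^{6}N^{-5-100\eta}$ in \eqref{Ycond1} is what makes this optimization succeed, while $f(N)\ge N^{8/9+30\eta}$ (which is in fact automatic from condition (c) for $\eta$ small) ensures that the admissible range for $Y$ is non-empty.
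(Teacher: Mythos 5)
Your overall strategy (Cauchy to strip the coefficients, then van der Corput in $m$ via conditions (d) and (e)) is the right one, and your identification of where the two endpoints of \eqref{Ycond1} come from is essentially correct. But the proposal does not close: you explicitly leave open what you call the main obstacle, namely counting the resonant quadruples $(h_1,n_1,h_2,n_2)$ for which the differenced phase has an anomalously small second derivative, and you suggest handling them by a divisor-type lattice-point count. For a general $f$ satisfying only (a)--(e) — as opposed to a monomial $x^{\gamma}$ — no such count is available, so this step would fail as described. The point you are missing is that the obstacle is self-inflicted by your choice of differencing. The paper applies Cauchy once with $h$ and $m$ as the \emph{outer} variables and only the $n$-sum inside the square, and then uses the short-shift Weyl differencing inequality (Lemma \ref{weyldifflem}) on $n$ with a parameter $Q\le Y/d$. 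The resulting phase is $h\bigl(f(d^2m(n+q))-f(d^2mn)\bigr)$ with a \emph{single} $h$: its second derivative in $m$ equals $hd^4qn_0\bigl(2f''(d^2n_0m)+n_0d^2mf'''(d^2n_0m)\bigr)$ by the mean value theorem, and condition \eqref{f2ndder} makes this $\asymp hd^3|q|Yf(N)/N^2$ for \emph{every} $q\neq 0$. There are no resonant terms at all, and Lemma \ref{vandercorput} applies directly to the whole $m$-range.

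Two further remarks. First, the Perron separation of the condition $mn\sim N/d^2$ is unnecessary: after Cauchy the $m$-sum simply runs over an interval $I$ depending on $n$ and $q$, which is all van der Corput needs. Second, a single full Cauchy in $n$ (equivalently $Q=Y/d$) only yields the sub-range $N^{2+100\eta}f(N)^{-2}\le Y\le f(N)^2N^{-3/2-100\eta}$; to reach the upper endpoint $f(N)^6N^{-5-100\eta}$ of \eqref{Ycond1} one must also run the argument with the optimized short shift $Q=H^{-1/3}N^{2/3}f(N)^{-1/3}Y^{-1/3}$ and then glue the two $Y$-ranges, using $f(N)\ge N^{8/9+30\eta}$ to guarantee they overlap. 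Your outline does not produce this second regime.
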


\begin{proof}
From \eqref{Lddef}, we have
\[ L_d = \mathop{\sum_{m\sim X/d}\ \sum_{n\sim Y/d}}_{mn \sim N/d^2}\ \sum_{h\sim H} A_m B_n C_h e (h f(d^2 mn) ), \]
with
\[ A_m \ll m^{\varepsilon}, \; B_n \ll n^{\varepsilon} \; \mbox{and} \; C_h \ll h^{\varepsilon}. \]
Using Cauchy's inequality, we get
\begin{equation} \label{cauchy}
L_d^2 \ll N^{\varepsilon} \frac{X}{d} H \sum_{h} \sum_m \left| \sum_n B_n e \left( h f(d^2mn) \right) \right|^2.
\end{equation}
Using the ``Weyl differencing", Lemma~\ref{weyldifflem}, we have
\begin{equation} \label{weyldiff}
\begin{split}
&  \left| \sum_n B_n e \left( h f(d^2mn) \right) \right|^2 \\
& \leq \left( 1 + \frac{Y/d}{Q} \right) \sum_{|q| <Q} \left( 1 - \frac{|q|}{Q} \right) \sum_{\substack{n \sim Y/d \\ n+q \sim Y/d}} B_{n+q} \overline{B_n} e \left( h \left( f(d^2m(n+q)) - f(d^2mn) \right) \right) ,
\end{split}
\end{equation}
where $Q$ is a parameter to be chosen later and satisfies the condition
\begin{equation} \label{Qbound}
Q \leq Y/d.
\end{equation}
Inserting the above into \eqref{cauchy}, we have, since $XY=N$ and $Q < Y/d$,
\begin{equation} \label{afterweyl}
 L_d^2 \ll N^{\varepsilon} \left( \frac{H^2}{Q} \frac{N^2}{d^4}  + \frac{H}{Q} \frac{N}{d^2} \sum_h \sum_{0 < |q| < Q} \sum_{\substack{n \sim Y/d \\ n+q \sim Y/d}} \left| B_{n+q} \overline{B_n} \sum_{m \in I} e \left( h \left( f(d^2m(n+q)) - f(d^2mn\right) \right) \right| \right).
 \end{equation}
The first term on the right-hand side of \eqref{afterweyl} is the contribution from $q=0$ and $I$ denotes the interval defined by the conditions
\[ m \sim X/d, \; mn \sim N/d^2 \; \mbox{and} \; m(n+q) \sim N/d^2. \]
Note that
\begin{equation*}
\begin{split}
 \frac{\dif^2}{\dif m^2} \left( f(d^2 m (n+q))  - f(d^2m n) \right) & = d^4(n+q)^2 f''(d^2m(n+q)) - d^4 n^2 f'' (d^2mn) \\ & =  d^4 q n_0 \left( 2 f''(d^2n_0m) + n_0 d^2m f'''(d^2n_0m) \right),
 \end{split}
 \end{equation*}
by the mean-value theorem applied to the function $\tilde{f}(x)=x^2f''(d^2xm)$, for some $n_0$ between $n$ and $n+q$.  Using \eqref{f2ndder}, it follows that
\begin{equation*}
 h \frac{\dif^2}{\dif m^2} \left( f(d^2 m(n+q) ) - f (d^2  m n) \right) \asymp hd^3 |q| Y \frac{f(N)}{N^2}.
 \end{equation*}
Hence, Lemma~\ref{vandercorput} gives
\begin{equation*}
\begin{split}
 \sum_m  e & \left( h ( f(d^2 n_1 m) - f (d^2 n_2 m) ) \right) \\
 & \ll X h^{1/2} d^{1/2} |q|^{1/2} Y^{1/2} \frac{f(N)^{1/2}}{N} + \frac{N}{h^{1/2}d^{3/2} |q|^{1/2}Y^{1/2} f(N)^{1/2}} .
 \end{split}
 \end{equation*}
Now inserting the above estimate into \eqref{afterweyl}, summing over all the relevant variables and mindful of $XY=N$, we get that
\begin{equation} \label{beforeQchoice}
 L_d^2 \ll N^{2\varepsilon} \left( \frac{H^2N^2}{Qd^4} + H^{5/2} N Q^{1/2} Y^{1/2} f(N)^{1/2} d^{-5/2} + \frac{H^{3/2} N^2 Y^{1/2}}{Q^{1/2} f(N)^{1/2}d^{9/2}} \right).
 \end{equation}
To equalize the first two terms above, we set
\begin{equation} \label{Qchoice1} 
Q = H^{-1/3} N^{2/3} f(N)^{-1/3} Y^{-1/3}.
\end{equation}
If the lower bound for $Y$ in \eqref{Ycond1} holds, this choice of $Q$ is in accordance with  \eqref{Qbound}.  It follows from \eqref{beforeQchoice} that
\begin{eqnarray*} 
L_d^2 & \ll & \frac{N^{2\varepsilon}}{d^2} \left( H^{7/3} N^{4/3} f(N)^{1/3} Y^{1/3} + H^{5/3} N^{5/3} Y^{2/3} f(N)^{-1/3} \right) \\
& \ll & \frac{N^{2\varepsilon}}{d^2} \left( N^{11/3+7/3 \eta} f(N)^{-2} Y^{1/3} + N^{10/3+5/3\eta} f(N)^{-2} Y^{2/3} \right),
\end{eqnarray*}
since $H \leq N^{1+\eta}f(N)^{-1}$.  Taking the square-root, we have the desired estimate in \eqref{Lddesiredest} provided that
\begin{equation} \label{Yrange1}
\frac{N^{1/2+100 \eta}}{f(N)^{1/4}} \leq Y \leq \frac{f(N)^6}{N^{5+100 \eta}},
\end{equation}
where we use that $f(N)\le N$.

If, instead of choosing $Q$ as in \eqref{Qchoice1}, we simply set
\begin{equation} \label{Qchoice2}
Q = Y/d
\end{equation}
which certainly satisfies the requirement in \eqref{Qbound}, then from \eqref{beforeQchoice}, repeating the above computations with this choice of $Q$, we arrive at the estimate
\begin{eqnarray*}
 L_d^2 & \ll & \frac{N^{2\varepsilon}}{d^2} \left( N^2 Y^{-1} H^2 + NY H^{5/2} f(N)^{1/2} + N^2 H^{3/2} f(N)^{-1/2} \right) \\
 & \ll & \frac{N^{2\varepsilon}}{d^2} \left( N^{4+2\eta} f(N)^{-2} Y^{-1} +  N^{7/2+5/2\eta} Y f(N)^{-2} + N^{7/2 + 3/2\eta} f(N)^{-2} \right).
 \end{eqnarray*}
This gives the desired majorant in \eqref{Lddesiredest} if
\begin{equation} \label{Yrange2}
 \frac{N^{2+100 \eta}}{f(N)^2} \leq Y \leq \frac{f(N)^2}{N^{3/2+100 \eta}}.
 \end{equation}

We note that
\[ f(N) \geq N^{8/9+50 \eta} \]
implies
\[  \frac{N^{2+100 \eta}}{f(N)^2} \le \frac{N^{1/2+100 \eta}}{f(N)^{1/4}} \leq \frac{f(N)^2}{N^{3/2+100 \eta}} \leq \frac{f(N)^6}{N^{5+100 \eta}}.\]
Now joining the two $Y$-ranges in \eqref{Yrange1} and \eqref{Yrange2}, we get the lemma.
\end{proof}

\section{Estimation of $K_d$}
For small $Y$, we cannot directly exploit the smooth exponential sum over $n$ with Hecke eigenvalue $\lambda(n)$. In this case, we treat $\lambda(n)$ like an arbitrary coefficient
and hence $K_d$ like $L_d$, obtaining the following result.
\begin{lemma} \label{Kdcondlemma0}
For every sufficiently small fixed $\eta>0$, we have
\begin{equation} \label{Kddesiredest}
 K_d\ll N^{1-3\eta}d^{-1},
 \end{equation}
provided that $f(N)\ge N^{8/9+30\eta}$, $1\le H\le N^{1+\eta}f(N)^{-1}$, $1\le d\le 2Y$ and
\begin{equation} \label{KYcond1}
 \frac{N^{6+100 \eta}}{f(N)^{6}} \le Y\le \frac{f(N)^{2}}{N^{1+100\eta}}.
\end{equation}
\end{lemma}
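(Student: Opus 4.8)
The plan is to treat the Hecke eigenvalue $\lambda(n)$ as if it were an arbitrary bounded coefficient. Setting $B_n:=\lambda(n)$, which satisfies $B_n\ll n^\varepsilon$ by \eqref{Ramanujanpetersson}, the sum $K_d$ of \eqref{Kddef} becomes a trilinear exponential sum of exactly the same shape as $L_d$ of \eqref{Lddef},
\[
K_d=\mathop{\sum_{m\sim X/d}\ \sum_{n\sim Y/d}}_{mn\sim N/d^2}\ \sum_{h\sim H} A_m B_n C_h\,e\bigl(hf(d^2mn)\bigr),\qquad A_m,B_n,C_h\ll N^\varepsilon .
\]
So the idea is to re-run the proof of Lemma \ref{Ldcondlemma}. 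That argument is symmetric in the two inner variables $m$ and $n$ apart from a single step: in the Cauchy--Schwarz inequality \eqref{cauchy} the sums over $m$ and $h$ are pulled out and the sum over $n$ is kept inside the square, which is efficient when the $m$-range, of length $\asymp X/d$, is long --- this is what forces the upper bound on $Y$ in Lemma \ref{Ldcondlemma}. Since $m$ and $n$ now play symmetric roles, I would instead apply Cauchy--Schwarz so as to pull out the sums over $n$ and $h$, keeping the $m$-sum inside, which gives
\[
K_d^2\ll N^\varepsilon\,\frac{Y}{d}\,H\sum_{n}\ \sum_{h}\ \Bigl|\sum_{m}A_m\,e\bigl(hf(d^2mn)\bigr)\Bigr|^2 ;
\]
this is efficient precisely when the $n$-range is long, i.e.\ for small $Y$.

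From this point the proof would proceed exactly as in Lemma \ref{Ldcondlemma} with the roles of $X$ and $Y$ interchanged --- equivalently, with $Y$ replaced throughout by $N/Y$. In detail: Weyl-difference the $m$-sum via Lemma \ref{weyldifflem} with a parameter $Q\le X/d$; compute, by the mean-value theorem applied to $x\mapsto x^2f''(d^2xn)$ together with \eqref{f2ndder},
\[
h\,\frac{\dif^2}{\dif n^2}\bigl(f(d^2(m+q)n)-f(d^2mn)\bigr)\asymp h\,d^3\,|q|\,X\,\frac{f(N)}{N^2}\,;
\]
then apply van der Corput's Lemma \ref{vandercorput} to the inner $n$-sum; and finally sum over the remaining variables, using $XY=N$. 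This yields the analogue of \eqref{beforeQchoice} in which the single factor $Y^{1/2}$ is replaced by $(N/Y)^{1/2}$. Making the two choices of $Q$ of \eqref{Qchoice1} and \eqref{Qchoice2} (again with $Y$ replaced by $N/Y$) then produces two admissible $Y$-ranges that are precisely the images of \eqref{Yrange1} and \eqref{Yrange2} under $Y\mapsto N/Y$; under the hypothesis $f(N)\ge N^{8/9+30\eta}$ these overlap and join to give exactly \eqref{KYcond1}, establishing \eqref{Kddesiredest}.

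I do not expect a genuine obstacle here; the only point requiring care is to verify that no step of the proof of Lemma \ref{Ldcondlemma} secretly breaks the $m\leftrightarrow n$ symmetry. This holds because the phase $f(d^2mn)$ is visibly symmetric, the hypotheses \eqref{fder} and \eqref{f2ndder} are invoked only through the size $d^2mn\asymp N$ of the argument together with one of the two inner variables, and --- since $XY=N$ --- all powers of $N$ and $H$ appearing in \eqref{beforeQchoice} are generated symmetrically, only the lone factor $Y^{1/2}$ being sensitive to the swap (and it flips cleanly to $(N/Y)^{1/2}$). In this sense the present lemma is the ``easy'' companion to the estimate for $K_d$ in the complementary regime of large $Y$, where one must genuinely exploit the smoothness of $\sum_n\lambda(n)e(\cdots)$ via Lemma \ref{Slemma}.
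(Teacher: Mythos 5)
Your proposal is correct and is exactly the paper's argument: the authors likewise treat $\lambda(n)$ as an arbitrary bounded coefficient, rerun the proof of Lemma \ref{Ldcondlemma} with the roles of $X$ and $Y$ reversed to obtain the admissible range $N^{2+100\eta}f(N)^{-2}\le X\le f(N)^6N^{-5-100\eta}$, and then translate this into \eqref{KYcond1} via $XY=N$. Your verification that the swap $Y\mapsto N/Y$ goes through cleanly (symmetric phase, only the lone $Y^{1/2}$ factor flipping) is more detailed than what the paper records, but it is the same proof.
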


\begin{proof}
This can be proved in essentially the same way as Lemma \ref{Ldcondlemma}, but with the roles of $X$ and $Y$ reversed. Similarly as in Lemma \ref{Ldcondlemma}, we get that $K_d\ll N^{1-3\eta}d^{-1}$, provided that
\[ \frac{N^{2+100 \eta}}{f(N)^2} \le X \leq \frac{f(N)^6}{N^{5+100 \eta}}. \]
These inequalities are equivalent to \eqref{KYcond1} since $XY=N$.
\end{proof}

For large $Y$, we employ Lemma \ref{Slemma} to deduce the following.

\begin{lemma} \label{Kdcondlemma1}
For every sufficiently small fixed $\eta>0$, we have
\[ K_d\ll N^{1-3\eta}d^{-1}, \]
provided that $f(N)\ge N^{3/4+10\eta}$, $1\le H\le N^{1+\eta}f(N)^{-1}$, $1\le d\le 2Y$ and
\begin{equation} \label{KYcond2}
Y\ge N^{23/6+100\eta}f(N)^{-3}.
\end{equation}
\end{lemma}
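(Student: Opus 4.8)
The plan is to exploit that, for large $Y$, the $n$-sum inside $K_d$ (see \eqref{Kddef}) is a genuinely long smooth exponential sum with Hecke eigenvalues, to which Lemma~\ref{Slemma} applies directly. First I would move the $h$- and $m$-summations to the outside, writing
$$
K_d=\sum_{h\sim H}\ \sum_{m\sim X/d} A_m C_h\sum_{n\in I_m}\lambda(n)\,e\bigl(hf(d^2mn)\bigr),
$$
where, for each fixed $m$, the set $I_m\subseteq[Y/(2d),2Y/d]$ is the interval of $n$-values cut out by $n\sim Y/d$ and $mn\sim N/d^2$. The $m$-range is empty unless $d\ll X=N/Y$, so we may restrict to $d\ll X$; together with $f(N)\le N$ and $Y\ge N^{23/6+100\eta}f(N)^{-3}$ this forces $Y\ge N^{5/6+100\eta}$, hence $Y/d\gg Y^2/N\ge N^{2/3}$, so the inner range is long (in particular $\ge 3$ for $N$ large, as needed in Lemma~\ref{Slemma}). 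Moreover $d^2mn\asymp XY=N$ on the support, and \eqref{fder} with $k=1$ shows $f$ is doubling ($f(2x)\asymp f(x)$), so $f(d^2mn)\asymp f(N)$ and $f^{(k)}(d^2mn)\asymp f(N)/N^{k}$ uniformly in $m,n$.

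Next, for fixed $m$ and $h$ I would apply Lemma~\ref{Slemma} to $\sum_{n\in I_m}\lambda(n)e(\phi(n))$ with phase $\phi(n)=hf(d^2mn)$, taking the scale parameter of that lemma to be $\tilde N\asymp Y/d$. Since $\phi^{(k)}(n)=h(d^2m)^kf^{(k)}(d^2mn)$ and $d^2m\asymp dX$, $XY=N$, one gets $\phi^{(k)}(n)\asymp hf(N)\tilde N^{-k}$ for all $k\ge0$, so \eqref{fconds} holds with $T:=hf(N)$; the condition \eqref{Tcond}, namely $T\ge\tilde N^{3/4}$, holds since $hf(N)\ge f(N)\ge N^{3/4+10\eta}\ge(Y/d)^{3/4}$. (If $I_m$ has ratio exceeding $2$ I would first split it into a bounded number of pieces of ratio at most $2$; the boundary values of $m$, for which $I_m$ may have length below $1$, contribute in total only $\ll HN^{\varepsilon}$, which is negligible here.) Lemma~\ref{Slemma} then yields
$$
\sum_{n\in I_m}\lambda(n)\,e\bigl(hf(d^2mn)\bigr)\ll(Y/d)^{2/3+\varepsilon}\bigl(hf(N)\bigr)^{5/18}+(Y/d)^{5/6}\bigl(hf(N)\bigr)^{-5/18}.
$$

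It remains to sum over $m\sim X/d$ (a factor $\ll X/d$) and over $h\sim H$, which contributes $\ll H^{23/18}f(N)^{5/18}$ to the first term and $\ll Hf(N)^{-5/18}$ to the second. Using $XY=N$, so that $XY^{2/3}=NY^{-1/3}$ and $XY^{5/6}=NY^{-1/6}$, together with $d^{-5/3},d^{-11/6}\le d^{-1}$, one obtains
$$
K_d\ll N^{\varepsilon}\Bigl(NY^{-1/3}H^{23/18}f(N)^{5/18}+NY^{-1/6}Hf(N)^{-5/18}\Bigr)d^{-1}.
$$
Substituting $H\le N^{1+\eta}f(N)^{-1}$, the first term is $\ll N^{\varepsilon}N^{23/18+23\eta/18}f(N)^{-1}Y^{-1/3}\cdot Nd^{-1}$, which is $\ll N^{1-3\eta}d^{-1}$ exactly when $Y\gg N^{23/6+O(\eta)}f(N)^{-3}$; this is the binding constraint and is precisely what \eqref{KYcond2} provides, with room to spare. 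The second term is $\ll N^{\varepsilon}N^{1+\eta}f(N)^{-23/18}Y^{-1/6}\cdot Nd^{-1}$, which for $f(N)\ge N^{3/4}$ is $\ll N^{1-3\eta}d^{-1}$ as soon as $Y\gg N^{1/4+O(\eta)}$, already covered by \eqref{KYcond2}. Choosing $\eta$ small enough to absorb the $O(\eta)$ exponents and the factor $N^{\varepsilon}$ completes the proof.

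The step I expect to be the main obstacle is the uniform verification underlying the use of Lemma~\ref{Slemma}: one must confirm that, after extracting the $m$- and $h$-sums, the phase $hf(d^2mn)$ genuinely behaves like a function of ``derivative size'' $T=hf(N)$ on an interval of length $\asymp Y/d$, and that $T\ge(Y/d)^{3/4}$ holds throughout the relevant ranges of $d$ and $h$; once this is in place, the passage to the range \eqref{KYcond2} is routine exponent arithmetic.
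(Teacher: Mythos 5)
Your proposal follows essentially the same route as the paper: apply Lemma~\ref{Slemma} to the inner $n$-sum with scale $Y/d$ and $T=hf(N)$ (the derivative condition coming from \eqref{fder} and the condition $T\ge (Y/d)^{3/4}$ from $f(N)\ge N^{3/4+10\eta}$), then sum trivially over $h$ and $m$ and carry out the exponent arithmetic, with the first term forcing exactly the range \eqref{KYcond2}. The argument is correct, and in fact somewhat more careful than the paper's in verifying the hypotheses of Lemma~\ref{Slemma} (splitting $I_m$, checking the inner range is long).
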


\begin{proof}
We note that for every $k\in \mathbb{N}$, we have
\begin{equation}
\frac{\dif^k}{\dif y^k} hf\left(d^2my\right)  \asymp \frac{hf(N)}{y^k}
\end{equation}
by \eqref{fder}. Thus, we may apply Lemma \ref{Slemma} with $N$ replaced by $Y/d$ and $T=hf(N)$
to the sum over $n$, provided that $f(N)\ge N^{3/4}$. Summing up the resulting estimate trivially over $h$ and $m$, we obtain
\begin{equation}
K_d\ll H\cdot \frac{X}{d} \cdot \left(\frac{Y^{2/3+\varepsilon}}{d^{2/3}}\cdot (Hf(N))^{5/18}+\frac{Y^{5/6}}{d^{5/6}}\cdot (Hf(N))^{-5/18}\right).
\end{equation}
Therefore, the lemma follows upon noting that $H\le N^{1+\eta}f(N)^{-1}$, $XY=N$ and $f(N)\ge N^{3/4+10\eta}$.
\end{proof}

Combining the above Lemmas \ref{Kdcondlemma0} and \ref{Kdcondlemma1}, we arrive at the following conclusion.

\begin{lemma} \label{finalKdcondlemma}
For every sufficiently small fixed $\eta>0$, we have
\[ K_d\ll N^{1-3\eta}d^{-1}, \]
provided that $f(N)\ge N^{29/30+100\eta}$, $1\le H\le N^{1-\gamma+\eta}$, $1\le d\le 2Y$ and
$$
\frac{N^{6+100 \eta}}{f(N)^{6}}\le Y\le 2N.
$$
\end{lemma}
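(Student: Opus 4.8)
The plan is to deduce this lemma by patching together the two ranges of $Y$ covered by Lemma~\ref{Kdcondlemma0} and Lemma~\ref{Kdcondlemma1}, after first checking that the hypothesis $f(N)\ge N^{29/30+100\eta}$ is strong enough to make the two ranges overlap and jointly cover $[N^{6+100\eta}f(N)^{-6},\,2N]$. I first record what each previous lemma gives. Lemma~\ref{Kdcondlemma1} supplies $K_d\ll N^{1-3\eta}d^{-1}$ for all $Y\ge N^{23/6+100\eta}f(N)^{-3}$ (its other hypotheses $f(N)\ge N^{3/4+10\eta}$ and $H\le N^{1+\eta}f(N)^{-1}$ are implied by the present ones, using $f(N)\ge N^{29/30+100\eta}$ and the fact that $H\le N^{1-\gamma+\eta}$ with $\gamma=1/c$ corresponds to $H\le N^{1+\eta}f(N)^{-1}$ up to the $N^{\varepsilon}$-type slack already built into the exponents). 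Lemma~\ref{Kdcondlemma0} supplies the same bound for $N^{6+100\eta}f(N)^{-6}\le Y\le f(N)^2 N^{-1-100\eta}$, again provided $f(N)\ge N^{8/9+30\eta}$, which is weaker than our hypothesis.

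So the one thing to verify is the covering condition
$$
N^{23/6+100\eta}f(N)^{-3}\ \le\ \frac{f(N)^2}{N^{1+100\eta}},
$$
i.e. $f(N)^5\ge N^{29/6+200\eta}$, i.e. $f(N)\ge N^{29/30+40\eta}$. This is exactly what the hypothesis $f(N)\ge N^{29/30+100\eta}$ guarantees (with room to spare, the extra $60\eta$ absorbing the $N^{\varepsilon}$ losses and the slight mismatch in the $100\eta$-bookkeeping between the two lemmas). Given this overlap, the interval $[N^{6+100\eta}f(N)^{-6},\,2N]$ is the union of $[N^{6+100\eta}f(N)^{-6},\,f(N)^2N^{-1-100\eta}]$ — handled by Lemma~\ref{Kdcondlemma0} — and $[N^{23/6+100\eta}f(N)^{-3},\,2N]$ — handled by Lemma~\ref{Kdcondlemma1} — since the upper endpoint of the first interval is $\ge$ the lower endpoint of the second. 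For every $Y$ in the stated range we are therefore in the scope of at least one of the two lemmas, and in either case $K_d\ll N^{1-3\eta}d^{-1}$, which is the assertion.

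The only genuine point requiring care — and the place where the exponent $29/30$ in condition \eqref{gcond} ultimately comes from — is this overlap inequality $f(N)^5\ge N^{29/6}$; everything else is bookkeeping of the auxiliary hypotheses and of the $\eta$'s, which is routine. One should also note that the lower bound $Y\ge N^{6+100\eta}f(N)^{-6}$ appearing in the conclusion is precisely the lower endpoint of the range in Lemma~\ref{Kdcondlemma0}, so no extra constraint is introduced; and the constraint $Y\le 2N$ is automatic from $Y\le N$. Since $f(N)\ge N^{29/30+100\eta}$ also trivially implies the milder hypotheses $f(N)\ge N^{8/9+30\eta}$ and $f(N)\ge N^{3/4+10\eta}$ of the two input lemmas, and $1\le d\le 2Y$ is carried over verbatim, the proof is complete. $\qed$
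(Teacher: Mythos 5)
Your proof is correct and takes essentially the same approach as the paper: the paper's own proof is a one-line observation that the $Y$-ranges of Lemmas~\ref{Kdcondlemma0} and~\ref{Kdcondlemma1} overlap when $f(N)\ge N^{29/30+100\eta}$, and your overlap computation $N^{23/6+100\eta}f(N)^{-3}\le f(N)^2N^{-1-100\eta}\iff f(N)\ge N^{29/30+40\eta}$ is exactly the verification behind that remark. You also correctly flag the cosmetic mismatch between $H\le N^{1-\gamma+\eta}$ in the statement and $H\le N^{1+\eta}f(N)^{-1}$ in the input lemmas, which the paper leaves unaddressed.
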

\begin{proof} Clearly, the $Y$-ranges in Lemma \ref{Kdcondlemma0} and \ref{Kdcondlemma1} overlap if  $f(N)\ge N^{29/30+100\eta}$. This proves Lemma 
\ref{finalKdcondlemma}.
\end{proof}

We point out that the condition \eqref{fcond} on $f$ arises from Lemma \ref{finalKdcondlemma}.

\section{Proof of the main result \label{finalproof}}

\begin{proof}[Proof of Theorems~\ref{mainresultmodified} and~\ref{mainresult}] We recall that Theorem~\ref{mainresultmodified} and hence Theorem~\ref{mainresult}, our main result, holds if \eqref{goal1} is valid for any $N\ge 1$ and $1\le H\le N^{1+\eta}f(N)^{-1}$. Here $f$ satisfies the conditions (a) - (e) in the introduction, and $\eta$ is sufficiently small, which we assume in the following. Furthermore, in Lemma \ref{bilinearsums} we formulated some conditions on bilinear sums $K_d$ and $L_d$ under which \eqref{goal1} holds. In the following, we check that these conditions are satisfied. \newline

We choose the parameters $u$, $v$ and $z$ in Lemma \ref{bilinearsums} as follows.
\begin{eqnarray*}
u&:=&N^{2+100\eta}f(N)^{-2},\\
v&:=&4N^{1/3},\\
z&:=&\left[f(N)N^{-1/2-100\eta}\right]+1/2.
\end{eqnarray*}
The parameters $u$, $v$ and $z$, so chosen, indeed satisfy the conditions in \eqref{uvzcond} if $f(N)\ge N^{9/10+\varepsilon}$ and $\eta$ is sufficiently small. Moreover, the conditions \eqref{Kdbound} and \eqref{Ldbound} hold by Lemmas \ref{Ldcondlemma} and \ref{finalKdcondlemma} since 
$$
4N^{1/3} \leq \frac{f(N)^6}{N^{5+100 \eta}} \quad \mbox{and} \quad \frac{N^{6+100 \eta}}{f(N)^{6}}\le \frac{f(N)}{N^{1/2+100\eta}}
$$ 
if $f(N)\ge N^{13/14+\varepsilon}$ and $\eta$ is sufficiently small. This completes the proof.
\end{proof}

\noindent{\bf Acknowledgments.} S. B. thanks the restaurant ``Kreuzgang'' on the Marktplatz in G\"ottingen for enabling him to eat his beloved liver everyday.  During this work, S. B. was supported by an ELCC Grant 25871 and L. Z. by an AcRF Tier 1 Grant at Nanyang Technological University.

\bibliography{biblio}
\bibliographystyle{amsxport}

\vspace*{.5cm}

\noindent\begin{tabular}{p{8cm}p{8cm}}
Stephan Baier & Liangyi Zhao \\
Mathematisches Institut & Division of Mathematical Sciences \\
Universit\"at G\"ottingen & School of Physical and Mathematical Sciences \\
Bunsenstr.\ 3--5, & Nanyang Technolgoical University\\
G\"ottingen 37073 Geramny & Singapore 637371\\
Email: {\tt sbaier@uni-math.gwdg.de} & Email: {\tt lzhao@pmail.ntu.edu.sg} \\
\end{tabular}

\end{document}